\theoremstyle{plain}
\newtheorem{lema}{Lemma}[section]
\newtheorem{prop}[lema]{Proposition}
\newtheorem{teo}[lema]{Theorem}
\newtheorem{conj}[lema]{Conjecture}
\newtheorem{ques}[lema]{Question}
\theoremstyle{remark}
\newtheorem{remark}[lema]{Remark}
\newtheorem{obs}[lema]{Observation}
\theoremstyle{definition}
\newtheorem{defi}[lema]{Definition}
\newtheorem{ej}[lema]{Example}
\def\a\Si{{\rm{a}\Sigma }}
\def\w\Si{{\rm{w}\Sigma }}
\def\w {{\textrm {w}}}
\def\max{{\text{max}}}
\def\des{{\mathrm{des}}}
\def\a{\mathrm{a}}
\def\Int{{\mathrm{Int}}}
\def\Dec{\mathrm{Dec}}
\def\D{\Delta}
\def\BB{\mathbb{B}^+_{n,j}}
\def\B{{B}^+_{n,j}}
\def\Bj{{B}^+_{n,n-j+1}}
\def\BBB{\widetilde{\mathbb{B}}^+_{n,j}}
\def \g{\gamma}
\def \ga{\widetilde{\gamma}}
 \newcommand{\Si}{\Sigma}
 \newcommand{\si}{\sigma}
\begin{document}

\title[$\g$- and Local $\g$-vectors ]{ On $\g$- and local $\g$-Vectors of the Interval Subdivision }

\author[I. Anwar]{Imran Anwar}
\author[S. Nazir]{Shaheen Nazir}

\address{Abdus Salam School of Mathematical Sciences\\
Government College  University\\
Lahore, Pakistan}

\address{Department of Mathematics\\
Lahore University of Management Sciences\\
Lahore, Pakistan}

\email{imrananwar@sms.edu.pk}

\email{shaheen.nazir@lums.edu.pk}

\begin{abstract}
We show that the $\g$-vector of the interval subdivision of a simplicial complex
with a nonnegative and symmetric $h$-vector is nonnegative. In particular, we prove
that such $\g$-vector is the $f$-vector of some balanced simplicial complex.
Moreover, we show that the local $\g$-vector of the interval subdivision of a simplex
is nonnegative; answering a question by Juhnke-Kubitzke et al.

\end{abstract}\subjclass[2010]{}

\keywords{simplicial complex, subdivision of a simplicial complex, $h$-vector, $\g$-vector, local $\g$-vector, balanced complex}

\thanks{The authors are grateful to Volkmar Welker  for suggesting the study of 
nonnegativity of $\g$-vector of the interval  subdivision.
This project is
supported by the Higher Education Commission of Pakistan. }

\maketitle

\section{Introduction}
The $\g$-vector is an important enumerative invariant
 of a flag homology sphere. The general question is asked about $\g$-vector
 whether it is nonnegative or not. It has been conjectured by Gal in \cite{gal2005real} that this
vector is nonnegative for every such sphere.
\begin{conj}\label{Gal} \cite{gal2005real}
  If $\D$ is a flag homology sphere, then $\g(\D)$ is nonnegative.
\end{conj}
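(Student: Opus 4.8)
The plan is to establish $\g$-nonnegativity by induction on $\dim\D$ through a local-to-global decomposition of the $h$-vector, reducing an arbitrary flag sphere to pieces whose $\g$-vectors are controlled. Recall first that the $h$-vector of a $(d-1)$-dimensional homology sphere is symmetric (Dehn--Sommerville), so the expansion $h(\D,t)=\sum_{i=0}^{\lfloor d/2\rfloor}\g_i(\D)\,t^i(1+t)^{d-2i}$ exists and is unique; thus $\g(\D)\ge 0$ is precisely the statement that this canonical presentation of the (symmetric, unimodal) vector $h(\D)$ has nonnegative coefficients. Two coefficients come for free: $\g_0=h_0=1$, and $\g_1=h_1-d=f_0(\D)-2d\ge 0$, since a flag $(d-1)$-sphere has at least $2d$ vertices (induct on $d$: a vertex link is a flag $(d-2)$-sphere, hence already contributes $\ge 2(d-1)$ neighbours, and flagness together with the non-conical structure of a sphere produces one further non-neighbour). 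Gal's inequality $\g_2\ge 0$ is the next, and considerably harder, known input. Note also that the top coefficient $\g_{\lfloor d/2\rfloor}$ equals, up to sign, the Charney--Davis number of $\D$, so a complete proof of Conjecture~\ref{Gal} subsumes the Charney--Davis conjecture and no short argument is to be expected.

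The first substantive step I would carry out is the case of flag spheres presented as subdivisions. If $\D'$ is obtained from a complex $\D_0$ with nonnegative symmetric $h$-vector by a subdivision of the kind relevant here (barycentric, interval, and more generally any subdivision whose local $h$-polynomials admit symmetric decompositions), then one has a local-to-global identity of the form
\[
\g(\D',t)=\sum_{\tau\in\D_0}\ell_\g\!\bigl(\D'|_\tau,t\bigr)\,\g\!\bigl(\mathrm{lk}_{\D_0}\tau,t\bigr),
\]
where $\D'|_\tau$ is the induced subdivision of the simplex $\tau$ and $\ell_\g$ its local $\g$-polynomial. Hence $\g(\D')\ge 0$ as soon as every local $\g$-polynomial of the subdivision rule is nonnegative; for the barycentric subdivision this is known (Athanasiadis), and the local $\g$-vector computation carried out below supplies exactly this input for the interval subdivision $\Int(\cdot)$. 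It follows that Conjecture~\ref{Gal} holds unconditionally for every flag homology sphere of the form $\Int(\D_0)$, for barycentric subdivisions of homology spheres, and, iterating, for anything reachable from the boundary of a cross-polytope, a Coxeter complex, or such a subdivision by further subdivisions of the same kind; the induction bottoms out at the $\g_0,\g_1$ computed above.

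The decisive step is to pass from this subclass to an \emph{arbitrary} flag homology sphere, and this is where I expect the argument to stall. There is no known normal form: it is open whether every flag sphere is a flag subdivision of a ``model'' flag sphere whose $\g$-nonnegativity is already available, and the elementary moves that do connect flag spheres of a fixed dimension --- flag edge subdivisions and their inverses, a flag analogue of bistellar flips --- are not known to change $\g$ by a controllably signed vector; in particular a naive monotonicity of the shape ``$\g$ does not decrease under a flag edge subdivision of a flag sphere'' is what one would want and cannot presently justify. A genuine proof along these lines would therefore require either (a) a flag analogue of the bistellar connectivity theorem together with such a monotonicity statement, or (b) a structurally different tool --- a hard-Lefschetz-type property for a (possibly exterior-algebra or noncommutative) model attached to flag complexes that forces the $\g$-expansion to be nonnegative directly, in the spirit of the Lefschetz proofs of the $g$-theorem. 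Absent either, the realistic scope of this approach, and the content delivered in the present paper, is the unconditional verification of Conjecture~\ref{Gal} on the families of flag spheres arising from the interval and barycentric subdivision constructions.
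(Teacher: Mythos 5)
The statement you were asked to prove is Conjecture~\ref{Gal} itself, and the paper does not prove it either: it is quoted from Gal as an open problem and serves only as motivation for the special cases that are actually established. Your proposal correctly recognizes this --- you do not claim a proof, you delimit what is provable, and your diagnosis of where any such argument stalls (no normal form for flag spheres, no signed control of $\g$ under flag edge subdivisions or bistellar-type moves) is accurate. So there is no gap to report in the usual sense; the honest conclusion you reach is the right one, and your free computations of $\g_0$ and $\g_1$ are fine. One caution on a side remark: you describe $\g_2\ge 0$ for flag homology spheres as ``known input,'' but this inequality is not established in full generality and should not be invoked as settled.

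What is worth comparing is your sketch of the provable special case against what the paper actually does. You propose to obtain $\g$-nonnegativity for subdivisions from a local-to-global identity of the form $\g(\D',t)=\sum_{\tau}\ell_{\g}(\D'|_\tau,t)\,\g(\mathrm{lk}\,\tau,t)$ together with nonnegativity of the local $\g$-polynomials; this is the Athanasiadis route and it does deliver $\g(\Int(\D))\ge 0$ when $\D$ is a homology sphere. The paper takes a genuinely different path and proves something stronger (Theorem~\ref{main}): for any $\D$ with nonnegative \emph{symmetric} $h$-vector, $\g(\Int(\D))$ is the $f$-vector of a balanced simplicial complex, i.e.\ an FFK-vector. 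The mechanism is to expand $h(\Int(\D),t)$ in the symmetrized refined type-$B$ Eulerian polynomials $\mathbb{B}^+_{n+1,j+1}(t)$, derive recurrences for the associated $\g$-vectors $\g^{(n,j)}$ and $\ga^{(n,j)}$ (Lemma~\ref{Re}), and show by induction that each is $d$- or $(d+1)$-good for $\g(B_n)$ in the sense of Nevo--Petersen--Tenner (Proposition~\ref{mainprop}). The local $\g$-nonnegativity your sketch needs as input is proved in the paper's last section by a separate slide decomposition of $B^{-+}_n(x)$, and is not what drives Theorem~\ref{main}. In short: your approach buys nonnegativity for sphere subdivisions via locality; the paper's approach buys the balancedness refinement (Conjecture~\ref{NP}) for a wider class of base complexes, at the cost of a heavier combinatorial induction.
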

Conjecture \ref{Gal} is a strengthening of the well known
Charney-Devis conjecture. The Gal conjecture holds for all Coxeter
complexes (see \cite{stembridge2008coxeter}), for the dual
simplicial complexes of associahedron and cyclohedron (see
\cite{nevo2011}), and for barycentric subdivision of homology
sphere (see\cite{nevo2011gamma}). The authors in \cite{nevo2011}
conjectured  further strengthening of Gal conjecture.
\begin{conj}\label{NP}\cite[Problem 6.4]{nevo2011}
  If $\D$ is a flag homology sphere then $\g(\D)$ is the $f$-vector of some balanced simplicial complex.
\end{conj}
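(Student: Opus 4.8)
Since Conjecture~\ref{NP} refines Conjecture~\ref{Gal}, the argument naturally splits in two: first one must know that $\g(\D)\ge 0$ at all, and only then can one ask whether the nonnegative vector $\g(\D)$ is an $f$-vector. For the second half the cleanest available tool is the Frankl--F\"uredi--Kalai theorem, which characterizes the $f$-vectors of completely balanced simplicial complexes by a Kruskal--Katona--type system of ``colored compression'' inequalities. So the plan is to reduce Conjecture~\ref{NP} to checking that $\g(\D)$ satisfies these inequalities --- equivalently, to exhibiting a color-shifted complex whose $f$-vector equals $\g(\D)$.

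To produce such a complex I would argue by \emph{subdivision induction}. Stanley's local $h$-vector identity writes the $h$-polynomial of a subdivision $\D'\to\D$ as a sum over faces $F\in\D$ of products $h(\mathrm{link}_{\D}F)\cdot \ell_F(\D')$ of a ``global'' factor and a local contribution, and there is a companion decomposition at the level of $\g$-vectors in which the local $\g$-vector of the relevant local piece appears. If those local $\g$-vectors are themselves $f$-vectors of balanced complexes --- which the present paper establishes for the simplex, the remaining cases being exactly the question of Juhnke-Kubitzke et al.\ --- and if one has a join/gluing lemma that preserves complete balancedness, then Conjecture~\ref{NP} follows for every flag homology sphere obtained from the boundary of a cross-polytope by iterated subdivision, in particular for all barycentric and interval subdivisions. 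The interval subdivision treated in this paper is the first nontrivial test of this scheme.

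The step I expect to be the genuine obstacle is that a general flag homology sphere need not be an iterated subdivision of anything simpler, so the inductive scheme above cannot by itself settle the full conjecture. To bridge this gap one needs either a structural result --- that any flag homology sphere is linked to a cross-polytope boundary through flag-preserving stellar-type moves whose effect on $\g$ is controlled --- which is not presently available, or a direct construction of the balanced complex from the combinatorics of $\D$ itself: for instance a colorful shelling or a discrete Morse matching on the barycentric subdivision $\mathrm{sd}(\D)$ whose critical cells are enumerated by $\g_i(\D)$ and whose underlying complex is manifestly $(\dim\D+1)$-colorable. I expect this to be the crux, since it amounts to a balanced refinement of the (still open) interpretation of $\g(\D)$ as a count of flag faces of $\D$, and it is precisely here that the passage from special subdivisions to arbitrary flag spheres currently fails.
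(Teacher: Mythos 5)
The statement you were asked to prove is Conjecture 6.4 of Nevo--Petersen, which is an \emph{open conjecture}: the paper does not prove it, and neither do you. Your sketch is candid about this, and your identification of the crux --- that a general flag homology sphere need not arise as an iterated subdivision of anything simpler, so no induction through subdivisions can reach the full statement --- is exactly right. There is therefore no hidden gap beyond the one you already name; what the paper actually establishes is only the special case of the interval subdivision of a complex with nonnegative symmetric $h$-vector (Theorem \ref{main}), and your text correctly frames that result as evidence for, not a proof of, Conjecture \ref{NP}.

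For that special case, however, the route you propose differs from the paper's. You suggest going through Stanley's local $h$-vector decomposition, the balancedness of local $\g$-vectors, and a gluing lemma preserving complete balancedness. The paper instead works directly with the $h$-vector transformation under interval subdivision (Theorem \ref{3.1}): it writes $h(\Int(\D),t)=\sum_j h_j\,\mathbb{B}^+_{n+1,j+1}(t)$ in terms of symmetrized $j$-Eulerian polynomials of type $B$, derives recurrences for the associated $\g$-vectors $\g^{(n,j)}$ and $\ga^{(n,j)}$ (Lemma \ref{Re}), and then runs an induction showing each of these is $d$- or $(d+1)$-good for $\g(B_{n-1})$ in the sense of Definition \ref{good}; the Frankl--F\"uredi--Kalai machinery then yields an $f$-vector of a balanced complex. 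Local $\g$-vectors enter the paper only in Section \ref{last}, and only for the simplex, not as an engine for the global statement. Your local-$h$ route would additionally require controlling the $h$-vectors of links of faces and proving a balanced-join lemma, neither of which the paper needs; on the other hand, if those ingredients could be supplied it would apply to a wider class of subdivisions than the interval subdivision alone. As a proof of Conjecture \ref{NP} itself, though, both approaches stop at the same wall you describe in your final paragraph.
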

\noindent This conjecture holds for the dual simplicial complex
of all flag nestohedera, see in  \cite{aisbett2014frankl}. Frohmader \cite[Theorem 1.1]{frohmader2008face}
showed that the $f$-vector of any flag simplicial complex satisfies the Frankl-F\"{u}redi-Kalai (FFK)
inequalities (see \cite{frankl1988shadows}). In \cite{nevo2011gamma}, authors showed that the $\g$-vector
of the barycentric subdivision of a homology sphere satisfies the FFK inequalities, i.e., the $f$-vector of a balanced simplicial complex.\\
The first  aim of this paper  is the confirmation of  Conjecture \ref{NP} in the case of the interval subdivision of a homology sphere. The main theorem is stated as:
  \begin{teo}\label{main}
    If $\D$ is a simplicial complex with a nonnegative and symmetric $h$-vector, then the $\g$-vector of the interval subdivision of $\D$ is the $f$-vector of a balanced simplicial complex.
  \end{teo}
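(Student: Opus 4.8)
The plan is to compute the $h$-vector of the interval subdivision $\Int(\D)$ in terms of the $h$-vector of $\D$ explicitly, and then read off the $\g$-vector and show it satisfies the Frankl–Füredi–Kalai inequalities. First I would recall (or derive) the combinatorial description: the faces of the interval subdivision $\Int(\D)$ are chains of intervals $[F_1,G_1]\subsetneq\cdots\subsetneq[F_m,G_m]$ in the face poset of $\D$, where an interval $[F,G]$ is a pair with $F\subseteq G$, both faces of $\D$. This means $\Int(\D)$ is the order complex of a poset, hence is itself the barycentric-type subdivision of the ``interval poset'' of $\D$, and in particular there is a local-formula expression for its $h$-polynomial: $h(\Int(\D),t) = \sum_{F\in\D} h(\mathrm{lk}_\D F,\,t)\cdot \ell_{[F]}(t)$ for suitable local contributions, or more concretely a transformation $h(\Int(\D),t) = \sum_{i} h_i(\D)\, A_{n,i}(t)$ for an explicit array of polynomials $A_{n,i}(t)$ depending only on $n=\dim\D+1$ and $i$. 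I would compute these $A_{n,i}(t)$ directly from the chain description.

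Second, with the linear transformation $h(\D)\mapsto h(\Int(\D))$ in hand, I would pass to $\g$-vectors. Since the $h$-vector of $\D$ is assumed symmetric and nonnegative, I would show the $h$-vector of $\Int(\D)$ is symmetric (this should follow because $\Int(\D)$ of a homology sphere is again a homology sphere, or directly from a palindromicity property of the $A_{n,i}(t)$) and then expand $\g(\Int(\D))$ as a nonnegative combination $\g(\Int(\D)) = \sum_i h_i(\D)\,\g(A_{n,i})$-type sum, reducing everything to understanding finitely many ``building-block'' $\g$-vectors.

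Third — and this is where the real work lies — I would prove that $\g(\Int(\D))$ is the $f$-vector of a balanced complex, not merely nonnegative. The strategy, following the approach of Nevo–Petersen–Tenner for the barycentric subdivision, is to show that each building-block polynomial's $\g$-vector satisfies the FFK inequalities and that the FFK inequalities are preserved under the nonnegative combination dictated by a symmetric $h(\D)$; equivalently, exhibit an explicit colored/balanced complex (or a shelling-type or shifting-type argument) whose $f$-vector realizes $\g(\Int(\D))$. Concretely I expect to need: (i) an analysis of the $A_{n,i}(t)$ showing each has a nonnegative $\g$-vector obeying FFK, perhaps via an explicit combinatorial model counting certain colored chains or ballot-type sequences; and (ii) a stability lemma saying sums $\sum h_i A_{n,i}$ with $h$ symmetric nonnegative again obey FFK.

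The main obstacle will be step (i)–(ii): verifying the FFK (Kruskal–Katona-for-balanced-complexes) inequalities rather than mere nonnegativity. Nonnegativity of $\g(\Int(\D))$ alone would follow from showing the building blocks have nonnegative $\g$-vectors and invoking symmetry, but FFK is a more rigid, inductive condition, and proving it is preserved under the specific convex-type combination coming from the interval subdivision is the crux. I would try to mirror the machinery of \cite{nevo2011gamma}, adapting their balanced-shifting / colored-complex construction to the interval-subdivision array $A_{n,i}(t)$; if a direct construction of the balanced complex is unwieldy, the fallback is to verify the FFK inequalities numerically for the building blocks in low dimension, identify the pattern, and prove it by induction on $n$ using a recursion for $A_{n,i}(t)$ coming from the link structure of $\Int(\D)$.
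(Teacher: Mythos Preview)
Your proposal follows essentially the same strategy as the paper: write $h(\Int(\D),t)=\sum_j h_j(\D)\,\mathbb{B}^+_{n+1,j+1}(t)$ for explicit type-$B$ $j$-Eulerian polynomials, pass to $\gamma$-vectors, and then adapt the Nevo--Petersen--Tenner ``$d$-good/$(d{+}1)$-good'' machinery from \cite{nevo2011gamma} to conclude FFK. One correction: your step (i) as stated will fail, because for $j>1$ the building-block $\gamma$-vectors $\gamma^{(n,j)}$ have leading entry $0$ and hence cannot individually be $f$-vectors (so cannot individually satisfy FFK). The paper's fix---which is exactly what your step (ii) gestures toward---is to observe that $h_0=1$ forces $\gamma^{(n+1,1)}=\gamma(B_n)$ to appear with coefficient $1$, use the known fact that $\gamma(B_n)$ is already FFK, and then prove via a recursion (Lemma~\ref{Re}) that every other $\gamma^{(n,j)}$ is $d$-good or $(d{+}1)$-good for $\gamma^{(n+1,1)}$, so that the full sum remains FFK.
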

\noindent This work is based on  the study of certain refinement of
Eulerian numbers of type $B$ used by the present authors in
\cite{AN} to describe the $h$-vector of the  interval subdivision
$\Int(\Delta)$ of a simplicial complex $\Delta$. The  interval
subdivision of a simplicial complex was introduced by Walker in
\cite{walker1988canonical}.  By Walker
\cite[Theorem 6.1(a)]{walker1988canonical}, the simplicial complex
of all chains in the partially ordered set
$I(\Delta\setminus\emptyset):=\{[A, B]\ | \ \emptyset\neq A \subseteq B \in \Delta \}$ is a subdivision of $\Delta$.
In \cite{AN}, authors have given the combinatorial description of $f$- and $h$-vectors of a simplicial complex under the interval subdivision.\\
The second aim of this paper is to study the local $\g$-vector of
the interval subdivision of a simplex.  Stanley
\cite{stanley1992subdivisions} introduced the  local $h$-vector of a
topological subdivision of a  simplex as a tool to study the face
numbers of subdivisions of simplicial complexes. The local
$h$-vector plays an important role to answer the question posed by
Kalai and Stanley: {\em whether the $h$-vector increases
coordinate-wise after such subdivision of a Cohen-Macaulay complex?}
Since the local $h$-vector is symmetric it makes sense to define a
local $\g$-vector, which was introduced by Athanasiadis in
\cite{athanasiadis2012flag}. It already follows from
\cite{athanasiadis2013symmetric} that the local $\g$-vector of the
interval subdivision of a simplex is nonnegative. Here, we give
another proof of this result by answering a question asked by
Juhnke-Kubitzke et al in \cite{Juhnke-Kubitzke2018}.

\begin{ques}\label{ques} \cite[Problem 4.9]{Juhnke-Kubitzke2018}
  Find classes of subdivisions $\Gamma$ such that\linebreak $h(\Gamma_F , x)-h(\partial(\Gamma_F ), x)$ is
nonnegative, unimodal or $\g$-nonnegative. Moreover, for those
classes try to find a combinatorial interpretation of the
coefficients of \linebreak $h(\Gamma_F ,
x)-h(\partial(\Gamma_F ), x)$ respectively the coefficients of its
$\g$-polynomial.
\end{ques}
We show that for the  interval subdivision $\Gamma$ of a simplex,
$h(\Gamma_F , x)-h(\partial(\Gamma_F ), x)$ is  nonnegative and
$\g$-nonnegative. Along the way, we give a combinatorial
interpretation of the coefficients of  $h(\Gamma_F
, x)-h(\partial(\Gamma_F ), x)$ and as well of
the coefficients of its $\g$-polynomial. \\
We give a brief description on necessary definitions and notions in
Section $2$. In particular, we recall some known results about the
FFK-vectors and balanced complexes. In Section $3$, we give the
combinatorial foundation needed to  prove of the main result.
The proof of Theorem \ref{main} is given in Section $4$. In the
last section, we give a proof of nonnegativity of  the local $\g$-vector
of interval subdivision of a simplex. Additionally, we give a
geometrical description of the symmetric Eulerian polynomial
$B^{++}_n(x)$, defined in Section $3$.
\section{Background and Basic Notions}
A simplicial complex $\Delta$ on a finite vertex set $V$ is a
collection of subsets of $V,$ such that $\{v\}\in \Delta$ for all
$v\in V$, and if $F\in \Delta$ and $E\subseteq F$, then $E\in
\Delta$. The members of $\Delta$ are known as \textit{faces}. The
dimension of a face $F$ is $|F|-1$. Let $d = \max\{|F| : F \in \D\}$
and define the dimension of $\Delta$ to be $\dim \Delta = d-1$.\\

The \textit{$f$-polynomial} of a $(d-1)$-dimensional simplicial complex $\D$ is defined as:
$$f_{\D}(t)=\sum_{F\in \D}t^{\dim F+1}=\sum_{i=0}^{d}f_it^i,$$ where $f_i$ is the number of faces of dimension $i-1$. As $\dim \emptyset=-1$, so $f_0=1$. The sequence $f(\D)=(f_0,f_1,\ldots,f_d)$ is called the \textit{$f$-vector} of $\D$. Define the \textit{$h$-vector}
$h(\D) = (h_0, h_1, \ldots, h_d)$ of $\D$ by the \textit{$h$-polynomial}: $$h_{\D}(t):=(1-t)^df_{\D}(t/(1-t))=\sum_{i=0}^{d}h_it^i.$$
If $\D$ is a $(d-1)$-dimensional simplicial complex with the symmetric $h$-vector, i.e., the symmetric $h$-polynomial, then there exist integers $\g_i$ such that $$h_{\D}(t)=\sum_{0}^{\lfloor d/2\rfloor}\g_it^i(1+t)^{d-2i},$$ the sequence  $\g(\D)=(\g_0,\g_1,\ldots, \g_{\lfloor d/2 \rfloor})$ is called the $\g$-vector of $\D$.\\

The interval subdivision $\Int(\Delta)$ of a simplicial complex
$\Delta$ is the simplicial complex on the vertex set
$I(\Delta\setminus \emptyset)$, where $I(\Delta\setminus
\emptyset):=\{[A, B]\ | \ \emptyset\neq A \subseteq B \in \Delta \}$
as a partially ordered set ordered by inclusion defined as
$[A,B]\subseteq [A',B'] \in I(\Delta\setminus \emptyset) $ if and
only if $A'\subseteq A\subseteq B\subseteq B'$. By Walker
\cite{walker1988canonical}, the simplicial complex of all chains in
the partially ordered set $I(\Delta\setminus \emptyset)$ is a
subdivision of $\Delta$. In \cite{AN}, authors have given the combinatorial description of $f$- and $h$-vectors of a simplicial complex under the interval subdivision.

\begin{teo}\label{3.1}\cite[Theorem 2.2 and Theorem 3.1]{AN}
   Let $\Delta$ be a $(d-1)$-dimensional simplicial complex. Then the transformation of $f$-vector  of
$\Delta$ to $f$-vector of interval subdivision $\Int(\Delta)$ is given as
 $$f(\Int(\Delta))= [(\mathcal{F}_d)_{k,l}]_{0\leq k,l\leq d}f(\D),$$
 where
 $$(\mathcal{F}_d)_{0,l}=\left\{
               \begin{array}{ll}
                 1, & \hbox{$l=0$;} \\
                 0, & {l>0.}
               \end{array}
             \right.
$$ and for $1\leq k\leq d$, we have
\begin{equation}\label{matrix}
  (\mathcal{F}_d)_{k,l}=\sum_{j=0}^{k-1}(-1)^j{k-1 \choose
j}[(2k-2j)^l-(2k-2j-1)^l]
\end{equation} and the transformation of $h$-vector  of
$\Delta$ to $h$-vector of  $\Int(\Delta)$ is given as
\begin{equation}\label{h-vector}
   h(\Int(\D))=[B^+(d+1,s+1,r)]_{0\leq s,r\leq d}h(\D),
   \end{equation}
   where $B^+(d+1,s+1,r)$ is the number of $\si\in B_{d+1}$ with $\si_1=s$, $\si_{d+1}>0$ and $\des_B(\si)=r$.
\end{teo}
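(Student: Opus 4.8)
The plan is to establish the two displayed identities in turn, deducing the $h$-vector statement from the $f$-vector statement by the classical substitution $h_{\Gamma}(t)=(1-t)^{d}f_{\Gamma}(t/(1-t))$. The $f$-vector identity I would prove by a ``support'' decomposition of the faces of $\Int(\D)$ followed by an inclusion-exclusion count, after which the $h$-vector identity collapses to a single Worpitzky-type identity for the hyperoctahedral group $B_{d+1}$.

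\textbf{The $f$-vector transformation.} A face of $\Int(\D)$ is a chain $[A_{1},B_{1}]\subsetneq\cdots\subsetneq[A_{k},B_{k}]$ in $I(\D\setminus\emptyset)$, listed so that $[A_{1},B_{1}]$ is its least element. The definition of the order then forces
\[
\emptyset\neq A_{k}\subseteq A_{k-1}\subseteq\cdots\subseteq A_{1}\subseteq B_{1}\subseteq\cdots\subseteq B_{k}\in\D ,
\]
so every face of $\Int(\D)$ has a well-defined \emph{support} $B_{k}\in\D$, the unique maximal second coordinate occurring in it; conversely, for $G\in\D$ the $(k-1)$-faces of $\Int(\D)$ with support $G$ are exactly the length-$k$ chains of intervals inside the simplex $2^{G}$ whose largest second coordinate equals $G$, and their number depends only on $|G|$. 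Grouping the faces of $\Int(\D)$ by support gives $f_{k}(\Int(\D))=\sum_{l}(\mathcal F_{d})_{k,l}f_{l}(\D)$, where $(\mathcal F_{d})_{k,l}$ is that number for $|G|=l$ (and $k=0$, the empty chain, gives the first row); this is the asserted matrix identity. To evaluate $(\mathcal F_{d})_{k,l}$, take $G=\{1,\dots,l\}$ and note that such a chain is the same datum as a multichain
\[
\emptyset\subsetneq D_{1}\subseteq D_{2}\subseteq\cdots\subseteq D_{2k-1}\subseteq D_{2k}:=G
\]
of subsets of $G$, via $D_{1}=A_{k},\dots,D_{k}=A_{1},D_{k+1}=B_{1},\dots,D_{2k-1}=B_{k-1}$, subject to the requirement that for each $i=1,\dots,k-1$ the equalities $D_{k-i}=D_{k-i+1}$ and $D_{k+i}=D_{k+i+1}$ do not hold simultaneously (strictness of the interval chain), the ``middle'' step $D_{k}\subseteq D_{k+1}$ being unconstrained. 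Encoding a multichain $\emptyset\subseteq D_{1}\subseteq\cdots\subseteq D_{2k}=G$ by the map $\phi\colon G\to\{1,\dots,2k\}$, $\phi(x)=\operatorname{min}\{i:x\in D_{i}\}$, turns ``$D_{1}\neq\emptyset$'' into ``$1\in\operatorname{im}\phi$'' and ``$D_{p}=D_{p+1}$'' into ``$p+1\notin\operatorname{im}\phi$'', so that the forbidden coincidences become exactly the condition that $\operatorname{im}\phi$ meet each of the $k-1$ value-pairs $\{j,\,2k+2-j\}$, $j=2,\dots,k$, while the value $1$ must be hit and the value $k+1$ is free. An inclusion-exclusion over these $k-1$ pairs together with the event ``$1\notin\operatorname{im}\phi$'' yields
\[
(\mathcal F_{d})_{k,l}=\sum_{j=0}^{k-1}(-1)^{j}\binom{k-1}{j}\Bigl[(2k-2j)^{l}-(2k-2j-1)^{l}\Bigr],
\]
the two summands recording whether or not the value $1$ is additionally excluded.

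\textbf{The $h$-vector transformation.} Writing $h_{\D}(t)=\sum_{l}f_{l}(\D)\,t^{l}(1-t)^{d-l}$ and likewise for $\Int(\D)$, the first part gives $h_{\Int(\D)}(t)=\sum_{l}f_{l}(\D)\sum_{k}(\mathcal F_{d})_{k,l}\,t^{k}(1-t)^{d-k}$. Substituting the inverse relation $f_{l}(\D)=\sum_{s}\binom{d-s}{l-s}h_{s}(\D)$ and collecting the coefficient of each $h_{s}(\D)$ exhibits $h(\Int(\D))$ as an explicit linear image of $h(\D)$, the coefficient matrix being the triple product of $[(\mathcal F_{d})_{k,l}]$ with the two classical $f\leftrightarrow h$ change-of-basis matrices. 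It then remains to prove the purely combinatorial identity that this matrix equals $[B^{+}(d+1,s+1,r)]_{0\le s,r\le d}$. Using $\sum_{l}\binom{d-s}{l-s}m^{l}=m^{s}(1+m)^{d-s}$ to collapse the $l$-sums, the entries become explicit alternating combinations of the quantities $(2m)^{s}(2m+1)^{d-s}-(2m-1)^{s}(2m)^{d-s}$, which must then be recognized as enumerating signed permutations in $B_{d+1}$ according to their number of type-$B$ descents, subject to a prescribed first letter and to positivity of the last letter. This is the type-$B$ analogue of the Worpitzky computation underlying the barycentric subdivision, with the classical Eulerian numbers replaced by their $\des_{B}$-refinement.

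The genuine difficulty is this last identification. One has to match the explicit alternating sums with the $\des_{B}$-refined type-$B$ Eulerian numbers, and in particular see that the extra conditions on $\sigma_{1}$ and on the sign of $\sigma_{d+1}$ encode precisely the bookkeeping isolated in the first part -- the non-emptiness $D_{1}=A_{k}\neq\emptyset$ and the location of the ``turn-around'' step $D_{k}\subseteq D_{k+1}$. Everything preceding that step -- the support decomposition, the chain-to-function encoding, and the inclusion-exclusion count -- is routine.
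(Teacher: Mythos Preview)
This theorem is quoted in the paper from \cite{AN} without proof, so there is no in-paper argument to compare against; I evaluate your proposal on its own merits.

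Your $f$-vector argument is correct. The support decomposition by the top second coordinate $B_k$, the bijection between length-$k$ interval chains supported on $G$ and multichains $\emptyset\neq D_1\subseteq\cdots\subseteq D_{2k}=G$ with the paired strictness constraints, the encoding by $\phi\colon G\to[2k]$, and the inclusion--exclusion over the $k-1$ value-pairs $\{j,2k+2-j\}$ (together with the separate condition $1\in\operatorname{im}\phi$) all check out and yield exactly the displayed formula for $(\mathcal F_d)_{k,l}$.

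The $h$-vector part, however, has a gap that you yourself flag. You correctly reduce the claim, via the two $f\leftrightarrow h$ change-of-basis matrices and the binomial collapse $\sum_l\binom{d-s}{l-s}m^l=m^s(1+m)^{d-s}$, to an identity of the shape
\[
\sum_{k}(-1)^{r-k}\binom{d-k}{r-k}\sum_{j}(-1)^{j}\binom{k-1}{j}\bigl[(2k{-}2j)^{s}(2k{-}2j{+}1)^{d-s}-(2k{-}2j{-}1)^{s}(2k{-}2j)^{d-s}\bigr]=B^{+}(d{+}1,s{+}1,r),
\]
but then only assert that this ``must be recognized'' as a type-$B$ Worpitzky identity. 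That recognition is precisely the content of the theorem: one has to exhibit, for each fixed $s$, a bijection (or a generating-function identity) showing that the left side enumerates signed permutations $\sigma\in B_{d+1}$ with $\sigma_1=s{+}1$, $\sigma_{d+1}>0$, and $\des_B(\sigma)=r$. The analogy with the barycentric case is apt, but the extra constraints on $\sigma_1$ and on the sign of $\sigma_{d+1}$ are not automatic consequences of the type-$A$ argument; they require a tailored encoding (in \cite{AN} this is done by a careful refinement of the $P$-partition/Carlitz approach). As written, your proposal stops short of this step, so the $h$-vector transformation remains unproved.
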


Our goal is to show that the $\g$-vector of the interval subdivision of $\D$ is $f$-vector of a balanced simplicial complex.
\subsection{FFK-vectors}
A simplicial complex $\D$ on the  vertex set $V$ is called \textit{$k$-colorable} if there is a
function $c : V \rightarrow \{1, 2, \ldots, k\}$, called a\textit{ coloring} of its vertices, such that
 every face has distinctly colored vertices. If a $(d-1)$-dimensional
complex $\D$ is $d$-colorable, we say it is a \textit{balanced}
simplicial complex.\\ There is an analogue of the
Kruskal-Katona-Sch\"{u}tzenberger(KKS) inequalities for
$k$-colorable simplicial complexes, due to Frankl, F\"{u}redi, and
Kalai \cite{frankl1988shadows}, known as FFK-inequalities.  The
vector satisfying FFK inequalities with respect to $k$ is called
$k$-\textit{FFK-vector}. It is shown in \cite{frankl1988shadows} that every face vector of a $k$-colorable complex is a $k$-FFK-vector and every $k$-FFK-vector is a face vector of some $k$-colorable complex. For more details, see
\cite{frankl1988shadows,nevo2011,nevo2011gamma}. Lets state some
definitions  from \cite{nevo2011gamma}.
\begin{defi}\label{good}\cite[Definition 3.6]{nevo2011gamma}
  Let $f=(1,f_1,\ldots,f_d)$ be the $f$-vector of a $(d-1)$-dimensional balanced complex.
  \begin{enumerate}
    \item \textbf{$(d+1)$-good.} Let $g = (g_1,\ldots, g_d, g_{d+1})$ be a sum of $d$-FFK-vectors, each of
which is dominated by $f$ . Some, but not all, of these $d$-FFK-vectors may be shorter than $f$; that
is, $g_{d+1}\neq 0$. Then we say that $(0, g)$ is $(d + 1)$-good for $f$. By \cite[Lemma 3.1]{nevo2011gamma}, it can be noted that $f + (0, g)$ is a $(d + 1)$-FFK-vector.
    \item \textbf{$d$-good.} Let $g = (g_1,\ldots, g_d) = f ^{(1)} +\cdots + f ^{(k)}$, with $g_d\neq 0$, be a sum of $(d-1)$-FFK-vectors such
that $f_i\geq (i +1) f^{(j)}_i$ for all $i$ and all $j$. Then we say that $(0, g)$ is $d$-good for $f$. By \cite[Lemma 3.5]{nevo2011gamma}, it can be noted that $f +(0, g)$
is a $d$-FFK-vector.
  \end{enumerate}
\end{defi}
 From the above definition, we have the following observation:
 \begin{obs}\label{3.7} \cite[Observation 3.7]{nevo2011gamma}
   Let $f = (1, f_1,\ldots, f_d)$ be the $f$-vector of a $(d-1)$-dimensional balanced complex. If $(0, g)$
is $d$-good for $f$ and $(0, g'
)$ is $(d+1)$-good for $f$, then $(0, g+g'
)$ is $(d+1)$-good for $f$.
 \end{obs}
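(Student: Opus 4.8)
The plan is to unwind the two definitions in Definition~\ref{good} and observe that the data guaranteed by $d$-goodness is already exactly the data demanded by $(d+1)$-goodness; the statement then reduces to concatenating two decompositions. Concretely: since $(0,g)$ is $d$-good for $f$, I would fix a decomposition $g=f^{(1)}+\cdots+f^{(k)}$ into $(d-1)$-FFK-vectors with
\[
 f_i\ \geq\ (i+1)\,f^{(j)}_i\qquad\text{for all }i\text{ and all }j ;
\]
and since $(0,g')$ is $(d+1)$-good for $f$, I would fix a decomposition $g'=h^{(1)}+\cdots+h^{(m)}$ into $d$-FFK-vectors, each dominated by $f$, with $g'_{d+1}\neq 0$.

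The one step that is not pure bookkeeping is to promote each $f^{(j)}$ to a $d$-FFK-vector dominated by $f$. That $f^{(j)}$ is a $d$-FFK-vector follows since, by Frankl--F\"{u}redi--Kalai, a $(d-1)$-FFK-vector is the face vector of some $(d-1)$-colorable complex, which is a fortiori $d$-colorable, whence its face vector is also a $d$-FFK-vector (one that simply happens to be shorter than $f$); equivalently, one uses that the bounding functions in the FFK inequalities are monotone in the number of colors. That $f^{(j)}$ is dominated by $f$ is immediate from the displayed bound: since $i\geq 0$ and $f^{(j)}_i\geq 0$, we get $f^{(j)}_i\leq (i+1)\,f^{(j)}_i\leq f_i$ for all $i$.

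It then remains only to assemble. Viewing $g$ as a vector of the same length as $g'$ by appending a zero in coordinate $d+1$, we obtain
\[
 g+g'\ =\ f^{(1)}+\cdots+f^{(k)}+h^{(1)}+\cdots+h^{(m)} ,
\]
which is a sum of $d$-FFK-vectors each dominated by $f$, and whose $(d+1)$-st coordinate is $g_{d+1}+g'_{d+1}=g'_{d+1}\neq 0$; in the language of Definition~\ref{good}(1), this says exactly that not all the summands are shorter than $f$. Hence $(0,g+g')$ is $(d+1)$-good for $f$.

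I do not expect a genuine obstacle: once the definitions are written out the argument is essentially formal, the only external ingredient being the (classical) monotonicity of FFK-vectors under enlarging the color set. The point worth isolating --- and the reason the additivity holds --- is the asymmetry between the two notions: a summand contributing to a $(d+1)$-good decomposition need only be dominated by $f$ (because there $g$ sits two coordinates above $f$), whereas $d$-goodness imposes the strictly stronger multiplicative bound $f_i\geq (i+1)f^{(j)}_i$ (because there $g$ sits only one coordinate above $f$). Granting this, the present statement is forced, and it is precisely the gluing lemma one needs to merge ``good'' contributions in the inductive construction underlying Theorem~\ref{main}.
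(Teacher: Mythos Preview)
Your argument is correct. Note, however, that the paper does not give its own proof of this observation: it is quoted verbatim from \cite[Observation~3.7]{nevo2011gamma} and used as a black box. So there is no ``paper's proof'' to compare against; you have simply supplied the (short) verification that the cited reference leaves to the reader. The two points you isolate --- that a $(d-1)$-FFK-vector is automatically a $d$-FFK-vector, and that the multiplicative bound $f_i\geq(i+1)f^{(j)}_i$ trivially implies the domination $f_i\geq f^{(j)}_i$ --- are exactly what is needed, and the rest is bookkeeping.
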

 \subsection{Coxeter Complex of type $B$:} Here, we briefly discuss the simplicial complex
 whose $f$-vector is the $\g$-vector of Coxeter complex of type $B$
 (details can be found in \cite{stembridge2008coxeter, nevo2011, petersen2015eulerian}). Define the set of decorated permutations, $\mathrm{Dec}_n$, to be the set of all permutations
$\si \in S_n$ with bars in the left peak positions. The bars can come in
one of four styles: $\{|=|^0, |^1, |^2, |^3\}$, and thus for each $\si\in S_n$, we have $4^{\textrm{lpk}(\si)}$
decorated permutations. For example, here are three elements of $\Dec_7$:
$$4|237|^2651, 4|^1327|^3156, 25|^3137|^1654.$$
 Notice that the leftmost block is always increasing, even it is a singleton. Also the rightmost block may be strictly
 increasing.\\  Construct a balanced simplicial complex $\Gamma(\Dec_n)$ whose faces are elements of  $\Dec_n$ with
 $\dim \si=\textrm{lpk}(\si)-1$, where $\textrm{lpk}(\si):=\{1\leq i\leq n-1\ :\ \si_{i-1}<\si_i>\si_{i+1}\}$, with $\si_0=0$.
 A decorated permutation $\si$ covers $\tau$ if  $\tau$ can be obtained from $\si$ by  removing a bar  from  $B_i|B_{i+1}$ and reordering $B_iB_{i+1}$ as keep the
decreasing part of $B_i$ as it is, and rewrite $B_{i+1}$, together
with the increasing part of $B_i$, in increasing order. It is clear
that vertices are elements with only one bar. $\Gamma(\Dec_n)$ has
dimension $\lfloor \frac{n}{2}\rfloor-1$ and the color set of a face
$\si$ is $\mathrm{col}(\si)=\{\lceil i/2 \rceil \ :\
\si_i>\si_{i+1}\}$. By \cite[Corollary 4.5 (2)]{nevo2011}, we have
$$f(\Gamma(\Dec_n))=\g(B_n).$$
\begin{lema}\label{ineq}
  For all $n\geq 1$, $1\leq i \leq \lfloor \frac{n}{2}\rfloor, $ we have $$(i+1)\g_i(B_{n-1})\leq \g_i(B_{n}).$$
\end{lema}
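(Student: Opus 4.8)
The plan is to exploit the combinatorial model $\Gamma(\Dec_n)$ for the $\gamma$-vector of the Coxeter complex of type $B$, where $\gamma_i(B_n)$ counts the faces of dimension $i-1$, i.e. the decorated permutations $\sigma \in \Dec_n$ with $\mathrm{lpk}(\sigma) = i$. Since the bars may come in four styles, $\gamma_i(B_n) = 4^i \cdot |\{\sigma \in S_n : \mathrm{lpk}(\sigma) = i\}|$, so it suffices to establish $(i+1)\,p_i(n-1) \le p_i(n)$, where $p_i(m)$ is the number of permutations in $S_m$ with exactly $i$ left peaks. First I would fix such a permutation $\tau \in S_{n-1}$ with $\mathrm{lpk}(\tau)=i$, and for each of the $i+1$ "ascending runs" of $\tau$ relative to the left-peak structure (equivalently, for each of the $i+1$ blocks in the bar decomposition of $\tau$) I would insert the new largest letter $n$ into a position that produces a distinct permutation $\sigma \in S_n$ still having $\mathrm{lpk}(\sigma)=i$. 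Concretely, inside each maximal increasing run one can insert $n$ at the top of that run: this neither creates a new left peak nor destroys an existing one, because $n$ simply extends a run. This gives an injection-like assignment from $\{(\tau, \text{block})\}$, a set of size $(i+1)\,p_i(n-1)$, into $S_n$.

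The key step is to check that the $(i+1)\,p_i(n-1)$ permutations produced this way are pairwise distinct and all lie in the target set $\{\sigma \in S_n : \mathrm{lpk}(\sigma)=i\}$. Distinctness follows by recovering $\tau$ and the chosen block from $\sigma$: delete the letter $n$ from $\sigma$ to get $\tau$, and the block is determined by which increasing run of $\tau$ the position of $n$ fell into. For the left-peak count, the careful case analysis is: if $n$ is placed at the end of an increasing run (just before the descent, or at the far right), then the values straddling $n$ change from "$a<b$, $b>c$" patterns in a way that preserves peak positions; one verifies $\sigma_{j-1}<\sigma_j>\sigma_{j+1}$ holds for the same set of indices (up to the shift caused by the insertion) as it did for $\tau$, using the convention $\sigma_0=0$. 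Handling the first block (the initial increasing run starting from the virtual $0$) and the last block (which may be a single descent or a run of length one) are the delicate boundary cases, and this boundary bookkeeping is where I expect the main obstacle to lie — one must be sure that inserting $n$ into a singleton block, or into the run containing position $1$, still yields exactly $i$ left peaks and a permutation not obtainable from any other $(\tau,\text{block})$.

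An alternative, possibly cleaner, route would avoid $S_n$ entirely and argue directly on $\Dec_n$ using the covering relation described in the excerpt: a face $\sigma$ of dimension $i-1$ in $\Gamma(\Dec_{n-1})$ should admit $i+1$ distinct "lifts" to faces of dimension $i-1$ in $\Gamma(\Dec_n)$, one per bar-delimited block, by inserting the new element appropriately into each block; the decorations carry over verbatim, so no extra factor of $4$ intervenes and the factor $i+1$ comes purely from the block count. This is essentially the same map phrased combinatorially on decorated permutations, and it makes the distinctness argument transparent since the bar positions of $\sigma$ already encode which block was modified. Either way, once the assignment is shown to be well-defined, injective on the pair $(\tau,\text{block})$, and to land in the correct graded piece, the inequality $(i+1)\gamma_i(B_{n-1}) \le \gamma_i(B_n)$ follows immediately by counting.
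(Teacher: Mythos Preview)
Your proposal is correct and follows essentially the same approach as the paper's proof: the paper works directly in $\Gamma(\Dec_n)$, inserting $n$ at the end of each of the $i+1$ blocks of a face $A_1|\cdots|A_{i+1}$ of $\Gamma(\Dec_{n-1})$, observing that no new left peak is created and that distinct faces yield disjoint sets of lifts. Your ``alternative, cleaner route'' is exactly this argument, and your first route via $p_i(m)$ is the same map after stripping the decorations.
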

\begin{proof}
  Let $\si=A_1| \cdots | A_{i+1}$ be any $(i-1)$-dimensional face of $\Gamma(\Dec_{n-1})$. Then a face of $\Gamma(\Dec_{n})$ can be otained from $\si$ by inserting $n$ to any of the $i+1$ block of $\si$. If we insert $n$ at the end of a block, no new left peak will be created and hence with this insertion, we get a $(i-1)$-dimensional face of $\Gamma(\Dec_{n})$ from a $(i-1)$-dimensional face of $\Gamma(\Dec_{n-1})$. Moreover, different faces in $\Gamma(\Dec_{n-1})$ give arise the disjoint set of faces in $\Gamma(\Dec_{n})$. Hence, the inequality follows.
\end{proof}

The next lemma follows from  Definition \ref{good} and Lemma \ref{ineq}:

\begin{lema}\label{5.5}
  Let $n=2d+1$. Then
   \begin{itemize}
            \item If $(0,f)$ is $d$-good for $\g(B_{n-2})$, then it is also $d$-good for $\g(B_{n-1})$.
                       \item If $(0,f)$ is $(d+1)$-good for $\g(B_{n-1})$, then it is also $(d+1)$-good for $\g(B_{n})$.
                     \end{itemize}
\end{lema}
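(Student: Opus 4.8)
The plan is to unwind the definition of "$d$-good" and "$(d+1)$-good" from Definition \ref{good} and check that each condition is stable when we pass from $\g(B_{n-2})$ to $\g(B_{n-1})$, respectively from $\g(B_{n-1})$ to $\g(B_{n})$, using the comparison inequality of Lemma \ref{ineq}. Recall that for $n=2d+1$, the vector $\g(B_{n})=f(\Gamma(\Dec_{n}))$ has length $d+1$ (its last entry $\g_d(B_{n})$ sits in degree $d=\lfloor n/2\rfloor$), while $\g(B_{n-1})$ has length $d$ and $\g(B_{n-2})$ has length $d$ as well (since $\lfloor (n-1)/2\rfloor=\lfloor(n-2)/2\rfloor=d-\varepsilon$, one must be a touch careful about which one has an extra coordinate — this bookkeeping is exactly where the statement $n=2d+1$ is being used).

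For the first bullet: suppose $(0,f)$ is $d$-good for $\g(B_{n-2})$. By Definition \ref{good}(2) this means $f=f^{(1)}+\cdots+f^{(k)}$ is a sum of $(d-1)$-FFK-vectors with $\g_i(B_{n-2})\geq (i+1)\,f^{(j)}_i$ for all $i$ and all $j$. Lemma \ref{ineq} applied with $n$ replaced by $n-1$ gives $(i+1)\g_i(B_{n-2})\leq \g_i(B_{n-1})$, hence $\g_i(B_{n-1})\geq (i+1)\g_i(B_{n-2})\geq (i+1)^2 f^{(j)}_i \geq (i+1) f^{(j)}_i$; moreover $\g_{d-1}(B_{n-1})\geq \g_{d-1}(B_{n-2})>0$ so the top coordinate is still nonzero, and the same FFK-decomposition of $f$ witnesses that $(0,f)$ is $d$-good for $\g(B_{n-1})$. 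Here one also needs that $\g(B_{n-1})$ is itself the $f$-vector of a $(d-1)$-dimensional balanced complex, which is the content of the $\Gamma(\Dec)$ construction recalled above.

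For the second bullet: suppose $(0,f)$ is $(d+1)$-good for $\g(B_{n-1})$. By Definition \ref{good}(1), $f$ is a sum of $d$-FFK-vectors each dominated (coordinatewise) by $\g(B_{n-1})$, with $f_{d}\neq 0$. Since $\g_i(B_{n-1})\leq \g_i(B_{n})$ for every $i$ — the $i=0$ case is trivial and for $i\geq 1$ it is immediate from Lemma \ref{ineq}, as $\g_i(B_{n})\geq (i+1)\g_i(B_{n-1})\geq \g_i(B_{n-1})$ — each of those $d$-FFK-vectors is also dominated by $\g(B_{n})$, and $\g(B_{n})$ is the $f$-vector of a $d$-dimensional balanced complex. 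Hence the very same decomposition shows $(0,f)$ is $(d+1)$-good for $\g(B_{n})$.

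The routine verifications above are easy; the one genuine subtlety — and the step I would be most careful about — is the index bookkeeping: making sure the lengths of $\g(B_{n-2}),\g(B_{n-1}),\g(B_{n})$ line up correctly with "$d$-good'' versus "$(d+1)$-good'' for $n=2d+1$, and that in each case the nonvanishing-of-the-last-coordinate clause ($g_d\neq 0$, resp. $g_{d+1}\neq 0$) is preserved, which again reduces to the monotonicity $\g_i(B_{n-1})\leq\g_i(B_n)$ supplied by Lemma \ref{ineq}.
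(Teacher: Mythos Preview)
Your approach is exactly the intended one: the paper's proof is only the sentence ``similar to the proof of Lemma~5.5 in \cite{nevo2011gamma}'', and that argument is precisely what you do --- unwind Definition~\ref{good} and feed in Lemma~\ref{ineq}.

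There is, however, one bookkeeping slip that you should fix, and it is exactly at the point you yourself flag as delicate. For $n=2d+1$ one has $\lfloor (n-1)/2\rfloor=d$ and $\lfloor (n-2)/2\rfloor=d-1$, so $\g(B_{n-1})=(1,\g_1,\ldots,\g_d)$ has length $d+1$ while $\g(B_{n-2})=(1,\g_1,\ldots,\g_{d-1})$ has length $d$; they are \emph{not} of the same length. Consequently, in the first bullet the hypothesis ``$(0,f)$ is $d$-good for $\g(B_{n-2})$'' is an instance of Definition~\ref{good}(1) (the ``$(d'+1)$-good'' clause with $d'=d-1$), i.e.\ each $f^{(j)}$ is merely \emph{dominated} by $\g(B_{n-2})$, whereas the conclusion ``$(0,f)$ is $d$-good for $\g(B_{n-1})$'' is an instance of Definition~\ref{good}(2), which demands $\g_i(B_{n-1})\geq (i+1)f^{(j)}_i$. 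So the correct chain is
\[
\g_i(B_{n-1})\ \geq\ (i+1)\,\g_i(B_{n-2})\ \geq\ (i+1)\,f^{(j)}_i,
\]
and here the full factor $(i+1)$ from Lemma~\ref{ineq} is essential, not just the monotonicity $\g_i(B_{n-2})\le\g_i(B_{n-1})$. Your chain ``$\geq (i+1)^2 f^{(j)}_i$'' used the wrong form of the hypothesis, though the conclusion you reach is still the right one. The second bullet is fine as written: both sides are Definition~\ref{good}(1) instances and simple monotonicity from Lemma~\ref{ineq} suffices.
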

\begin{proof}
  The proof is similar to the proof of  Lemma 5.5 \cite{nevo2011gamma}.
\end{proof}

\subsection{Subdivisions and Local $h$-Vectors:} The notion of local $h$-vectors was firstly studied by Stanley \cite{stanley1992subdivisions}.
A \textbf{topological subdivision} of a simplicial complex $\D$ is a simplicial complex $\D'$ with a map $\theta: \D'\rightarrow \D$ such that,
 for any face $F\in \D$, the following holds: (a) $\D'_F:=\theta^{-1}(2^F)$ is a subcomplex of $\D'$ which is homeomorphic to a ball of dimension $\dim(F)$; (b) the interior of $\D'_F$ is equal to $\theta^{-1}(F)$.  The face $\theta(G)\in \D$ is called  the \textbf{carrier} of  $G\in \D'$. The subdivision $\D'$ is called \textbf{quasi-geometric } if no face of $\D'$ has the carriers of its vertices contained in a face of $\D$ of smaller dimension. Moreover,
$\D'$ is called \textbf{geometric}  if there exists a geometric realization of $\D'$
which geometrically subdivides a geometric realization of  $\D$, in the way prescribed by $\theta$.
Clearly, all geometric subdivisions (such as the interval subdivisions considered in this paper) are quasi-geometric.
For more detail, we refer to \cite{stanley1992subdivisions} and a survey by Athanasiadis \cite{athanasiadis2016survey}. \\
Let $V$ be non-empty finite set. Let $\Gamma$ be a subdivision of a
$(d-1)$-dimensional simplex $2^V$. Then
\begin{equation}\label{local h-poly}
  \ell_V(\Gamma,x)=\sum_{F\subseteq V}(-1)^{d-|F|}h(\Gamma_F,x)=\sum_{j=0}^{d}\ell_jx^j
\end{equation}
is known as the \textbf{local $h$-polynomial} of $\Gamma$ (with respect to $V$). The sequence $\ell(\Gamma)=(\ell_0,\ldots,\ell_d)$ is the \textbf{local $h$-vector } of $\Gamma$( with respect to $V$). By Stanley \cite{stanley1992subdivisions}, it is well-known that the local $h$-vector $\ell_V(\Gamma,x)$ is symmetric. Thus it can be uniquely written in the following way: $$\ell_V(\Gamma,x)=\sum_{i=0}^{\lfloor d/2 \rfloor}\xi_ix^i(1+x)^{d-2i},$$
where $\xi_i\in \mathbb{Z}$. The sequence $\xi{(\Gamma)}=(\xi_0,\ldots,\xi_{\lfloor d/2 \rfloor})$ is called the local $\g$-vector of $\Gamma$(w.r.t. V). It is natural to ask whether the local $\g$-vector is nonnegative or not.  The local $\g$-vector is nonnegative  for
special classes of subdivisions including barycentric, edgewise, cluster and interval subdivisions
of a simplex \cite{athanasiadis2012flag, athanasiadis2012local, athanasiadis2013symmetric}.  The local $h$-polynomial of the interval subdivision of a simplex $2^V$ has nonnegative, symmetric and unimodal coefficients and has a nice combinatorial description due to Athanasiadis and Savvidou \cite{athanasiadis2013symmetric}.
\begin{teo}\label{local gamma}\cite[Corollary 1.2, Theorem 1.3, Propostion 4.1]{athanasiadis2013symmetric}
Let $\Gamma$ be the interval subdivision of a $(d-1)$-dimensional simplex $2^V$.
  The local $h$-polynomial $\ell_V(\Gamma,x)$ of $\Gamma$ is nonnegative, symmetric and $\g$-nonnegative. More precisely,
  \begin{equation}\label{local h}
    \ell_V(\Gamma,x)=\sum_{\si\in \mathcal{D}_d^B\cap B^*_d}x^{\mathrm{exc}{(\si)}},
  \end{equation}
  where $\mathrm{exc}_B(\si)$ is the number of $B$-excedances of $\si$; $\mathcal{D}_d^B$ is the set of derangements of $B_d$ and $B^*_d:=\{\si\in B_d\ :\ \si(m_{\si})>0\}$, $m_{\si}$ is the minimal element of $\{\si_1,\ldots,\si_d\}$.
\end{teo}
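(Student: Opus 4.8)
The plan is to evaluate $\ell_V(\Gamma,x)$ directly from its definition \eqref{local h-poly} by inclusion--exclusion, feeding in the $h$-vector formula of Theorem \ref{3.1}. First I would note that for every $F\subseteq V$ the restriction $\Gamma_F$ is precisely the interval subdivision of the subsimplex $2^F$ (its vertices over $2^F$ are exactly the intervals $[A,B]$ with $\emptyset\neq A\subseteq B\subseteq F$), and that a simplex has $h$-polynomial equal to $1$. Hence Theorem \ref{3.1}, equation \eqref{h-vector}, shows that $h(\Gamma_F,x)$ depends only on $j=|F|$; write $\psi_j(x)=\sum_{r\ge 0}B^+(j+1,1,r)\,x^{r}$ for this polynomial. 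Substituting into \eqref{local h-poly} gives
$$\ell_V(\Gamma,x)=\sum_{j=0}^{d}(-1)^{d-j}\binom{d}{j}\,\psi_j(x),$$
so $\ell_V(\Gamma,x)$ is the binomial inverse of the sequence $(\psi_j(x))_{j\ge0}$. Symmetry of $\ell_V(\Gamma,x)$ is then not something we must prove here --- it is Stanley's general theorem on local $h$-vectors, already recorded above --- and the content is the explicit combinatorial evaluation of this alternating sum together with its $\g$-nonnegativity.

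For the evaluation I would establish the companion identity
$$\psi_n(x)=\sum_{k=0}^{n}\binom{n}{k}\,D_k(x),\qquad D_k(x):=\sum_{\si\in\mathcal{D}_k^B\cap B^*_k}x^{\mathrm{exc}_B(\si)},$$
since uniqueness of binomial inversion then forces $\ell_V(\Gamma,x)=D_d(x)$, which is exactly \eqref{local h} and is visibly nonnegative. Proving this identity needs two ingredients. The first is a type-$B$ ``fundamental transformation'': a bijection on the set of signed permutations enumerated by $B^+(n+1,1,\cdot)$ that carries the $B$-descent statistic to the $B$-excedance statistic, in the spirit of the classical Foata-type correspondence between descents and excedances. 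The second is a decomposition of each such signed permutation into (i) a $k$-subset $S$ of positions recording its ``deranged part'', (ii) an element of $\mathcal{D}_{k}^B\cap B^*_k$ supported on $S$, and (iii) a forced structure on the complement, in such a way that $\mathrm{exc}_B$ is additive along the decomposition. This is the usual mechanism by which derangement polynomials arise as local $h$-polynomials (compare the barycentric subdivision, whose local $h$-polynomial is the ordinary derangement polynomial, the complement positions then being the fixed points); the subtlety particular to the interval subdivision is propagating the defining condition of $B^*_k$ --- positivity at the position of smallest absolute value --- correctly through the inclusion--exclusion.

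Finally, for $\g$-nonnegativity of $D_d(x)=\ell_V(\Gamma,x)$: this polynomial is symmetric of degree $d$, so it suffices to produce a group action on $\mathcal{D}_d^B\cap B^*_d$ --- a valley-hopping / modified Foata--Strehl action --- whose orbits are intervals for the $\mathrm{exc}_B$-statistic, so that each $\g_i$ counts the orbit representatives $\si$ (those with no ``double ascent'') having $\mathrm{exc}_B(\si)=i$; one checks that the action preserves both the derangement condition and the $B^*_d$-condition. Alternatively one may invoke the known $\g$-positivity of type-$B$ derangement polynomials and verify its compatibility with the restriction to $B^*_d$. I expect the main obstacle to be the second ingredient above: pinning down exactly which signed permutations $\psi_n$ enumerates and proving the excedance-additive decomposition with the $B^*_k$-constraint tracked correctly; the remaining steps are either routine manipulation of the matrix in Theorem \ref{3.1} or cited general facts.
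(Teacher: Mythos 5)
Your setup is fine as far as it goes: each restriction $\Gamma_F$ is indeed the interval subdivision of $2^F$, so $h(\Gamma_F,x)=B^+_{|F|}(x)$ by Theorem \ref{3.1} and Observation \ref{Obs}, and the inclusion--exclusion/binomial-inversion reformulation is correct. But at that point you have only restated the theorem: your ``companion identity'' $B^+_n(x)=\sum_{k}\binom{n}{k}D_k(x)$ is, via Stanley's locality formula $h(\Gamma,x)=\sum_{F\subseteq V}\ell_F(\Gamma_F,x)$, exactly equivalent to \eqref{local h} holding for all $d$ at once. The two ``ingredients'' you defer --- a type-$B$ descent-to-excedance transformation for the permutations counted by $B^+(n+1,1,\cdot)$, and an excedance-additive decomposition into a deranged part lying in $\mathcal{D}_k^B\cap B^*_k$ plus a forced complement --- are the entire content of the Athanasiadis--Savvidou result; nothing in your write-up constructs either bijection or verifies that the $B^*_k$ condition is the one that survives the inversion. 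Likewise, for $\g$-nonnegativity you assert that a valley-hopping action preserves both the derangement condition and the $B^*_d$ condition; the latter is precisely the delicate point (the known $\g$-positivity of the full type-$B$ derangement polynomial does not restrict termwise to the $B^*_d$ summand --- producing such a decomposition is the point of the cited paper), and it is not checked. So the proposal is a plan in which every load-bearing step is missing.

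It is also a different plan from the one this paper executes. The paper does not reprove \eqref{local h} at all (that formula is cited); its contribution is an alternative proof of the nonnegativity and $\g$-nonnegativity, obtained by combining the Juhnke-Kubitzke identity $\ell_V(\Gamma,x)=\sum_{F\subseteq V}[h(\Gamma_F,x)-h(\partial(\Gamma_F),x)]\,\mathfrak{d}^A_{|V|-|F|}(x)$ of Theorem \ref{Juhnke} with the identification $h(\Gamma_F,x)-h(\partial(\Gamma_F),x)=B^{-+}_{|F|}(x)$ (Proposition \ref{boundary} together with \eqref{BB+}) and the explicit $\g$-expansion of $B^{-+}_n(x)$ obtained from the slide decomposition (Proposition \ref{slide} and Theorem \ref{5.2}). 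If you want a proof in the spirit of this paper, that is the route to take; if you want to establish \eqref{local h} itself, you must actually supply the two bijections you postponed, since binomial inversion alone contributes nothing beyond a reformulation.
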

Here, we give an other proof of the nonnegativity of local $\g$-vector of the interval subdivision of a simplex using the result of Juhnke-Kubitzke et al \cite{Juhnke-Kubitzke2018}. They gave an expression of the local $h$-vector which involves differences of
$h$-vectors of restrictions of the subdivision and their boundary as well as derangement polynomials.
\begin{teo}\label{Juhnke}\cite[Theorem 4.4]{Juhnke-Kubitzke2018}
 Let $\Gamma$ be a subdivision  of a simplex $2^V$. The local $h$-polynomial of $\Gamma$ can be written as
 $$\ell_V(\Gamma,x)=\sum_{F\subseteq V}[h(\Gamma_F,x)-h(\partial(\Gamma_F),x)] . \mathfrak{d}^A_{|V|\setminus|F|}(x),$$
 where  $\mathfrak{d}^A_{d}(x)$ is the usual  derangement polynomial of order $d$. In particular when $\Gamma=\Int(2^{V})$, we obtain
\begin{equation}\label{ell}
\ell_V(\Int(2^{V}),x)=\sum_{F\subseteq V}[h(\Int(2^{F}),x)-h(\partial(\Int(2^{F})),x)] . \mathfrak{d}^A_{|V|\setminus|F|}(x)
\end{equation}
\end{teo}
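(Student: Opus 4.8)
The plan is to expand both sides of the asserted identity via Stanley's locality theorem for local $h$-vectors and then reduce everything to a numerical identity relating derangement and Eulerian polynomials. Recall that for any subdivision $\Gamma$ of a pure complex $\Delta$ one has $h(\Gamma,x)=\sum_{F\in\Delta}\ell_F(\Gamma_F,x)\,h(\mathrm{lk}_\Delta(F),x)$, and that restriction is transitive, so that for $G\subseteq F$ the restriction of $\Gamma_F$ to $G$ is again $\Gamma_G$. Applying this with $\Delta=2^F$, whose links are all simplices with $h$-polynomial $1$, yields $h(\Gamma_F,x)=\sum_{G\subseteq F}\ell_G(\Gamma_G,x)$. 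On the other hand $\partial(\Gamma_F)$ is the subdivision of the sphere $\partial 2^F$ induced by $\Gamma_F$; for $G\subsetneq F$ its restriction to $G$ is $\Gamma_G$, and $\mathrm{lk}_{\partial 2^F}(G)=\partial 2^{F\setminus G}$ has $h$-polynomial $s_m(x):=1+x+\cdots+x^{m-1}$ with $m=|F\setminus G|$. Hence $h(\partial(\Gamma_F),x)=\sum_{G\subsetneq F}\ell_G(\Gamma_G,x)\,s_{|F\setminus G|}(x)$, and with the convention $s_0(x):=0$ we may write uniformly
$$h(\Gamma_F,x)-h(\partial(\Gamma_F),x)=\sum_{G\subseteq F}\ell_G(\Gamma_G,x)\bigl(1-s_{|F\setminus G|}(x)\bigr).$$

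Next I would substitute this into the right-hand side of the claimed formula and interchange the two summations over $G\subseteq F\subseteq V$. Fixing $G$ and putting $n=|V\setminus G|$ and $j=|F\setminus G|$, there are $\binom{n}{j}$ faces $F$ with $G\subseteq F\subseteq V$ and $|F\setminus G|=j$, and for these $|V\setminus F|=n-j$; thus the coefficient of $\ell_G(\Gamma_G,x)$ on the right-hand side equals
$$c_n(x):=\sum_{j=0}^{n}\binom{n}{j}\bigl(1-s_j(x)\bigr)\,\mathfrak{d}^A_{n-j}(x).$$
Since $\ell_V(\Gamma,x)=\ell_V(\Gamma_V,x)$ is precisely the $G=V$ (that is, $n=0$) contribution, the theorem is equivalent to the statement that $c_0(x)=1$ and $c_n(x)=0$ for all $n\geq 1$.

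Finally I would prove this reduction. The case $n=0$ is immediate since $s_0(x)=0$ and $\mathfrak{d}^A_0(x)=1$. For $n\geq 1$, write $c_n(x)=\sum_{j}\binom{n}{j}\mathfrak{d}^A_{n-j}(x)-\sum_{j}\binom{n}{j}s_j(x)\,\mathfrak{d}^A_{n-j}(x)$. The first sum is the Eulerian polynomial $A_n(x)$, by the classical identity $A_n(x)=\sum_{k=0}^{n}\binom{n}{k}\mathfrak{d}^A_k(x)$ (sort a permutation of $[n]$ by its set of fixed points, which leaves the number of excedances unchanged). For the second sum, multiply by $x-1$ and use $(x-1)s_j(x)=x^j-1$ to obtain $\sum_{j}\binom{n}{j}x^j\mathfrak{d}^A_{n-j}(x)-A_n(x)$; a short manipulation of the exponential generating functions $\sum_{n\geq 0}\mathfrak{d}^A_n(x)\,t^n/n!=(x-1)e^{-t}/(x-e^{(x-1)t})$ and $\sum_{n\geq 0}A_n(x)\,t^n/n!=(x-1)/(x-e^{(x-1)t})$ shows $\sum_{j}\binom{n}{j}x^j\mathfrak{d}^A_{n-j}(x)=x\,A_n(x)$ for $n\geq 1$, so the second sum also equals $A_n(x)$ and therefore $c_n(x)=0$. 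The only step I anticipate requiring genuine care is the first paragraph—checking that Stanley's locality theorem applies to the subdivision $\partial(\Gamma_F)$ of $\partial 2^F$ and that the identifications of the restrictions and links are as claimed (including the degenerate cases $F=\emptyset$ and $|F|=1$); once that is in place, the concluding polynomial identity is routine.
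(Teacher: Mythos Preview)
The paper does not prove this theorem at all: it is quoted verbatim as \cite[Theorem~4.4]{Juhnke-Kubitzke2018} and used as a black box in Section~\ref{last}, with no argument supplied. So there is nothing to compare your approach to within the paper.

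That said, your proof sketch is correct and self-contained. Expanding both $h(\Gamma_F,x)$ and $h(\partial(\Gamma_F),x)$ via Stanley's locality formula over the respective base complexes $2^F$ and $\partial 2^F$, with the correct link $h$-polynomials $1$ and $s_{|F\setminus G|}(x)$, gives your expression for the difference; the interchange of summation and the reduction to the vanishing of $c_n(x)$ for $n\geq 1$ is clean. The two polynomial identities you invoke are standard and your small EGF computation is right: from $A_n(x)=\sum_k\binom{n}{k}\mathfrak{d}^A_k(x)$ one gets $D(t)=e^{-t}A(t)$ for the exponential generating functions, and then $e^{xt}D(t)=e^{(x-1)t}A(t)=xA(t)+(1-x)$ follows directly from the closed form $A(t)=(x-1)/(x-e^{(x-1)t})$, which is exactly $\sum_j\binom{n}{j}x^j\mathfrak{d}^A_{n-j}(x)=xA_n(x)$ for $n\geq1$. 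Your caveat about the degenerate cases is well placed: for $F=\emptyset$ one should take $h(\partial(\Gamma_\emptyset),x)=0$, and for $|F|=1$ both $h$-polynomials equal $1$; with these conventions, and using that Stanley's formula applies to any pure base complex (so in particular to $\partial 2^F$), everything goes through.
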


In Section \ref{last}, we  answer  the Question \ref{ques} in case of the interval subdivision by giving a nice combinatorial description on differences of
$h$-vectors of restrictions of the subdivision and their boundary.

\section{Symmetric  Eulerian Polynomials of type $B$}
 Let $B_n$  be  the group consisting of
all the bijections $\sigma$ of the set $ \{\pm 1,\ldots,\pm n\}$ onto itself such that $\sigma_{-i}= -\sigma_{i}$ for all $i\in \{\pm 1,\ldots, \pm n\}$.
For $\sigma\in B_n$, the \textit{Descent set} is defined as  $$\textrm{Des}_B(\sigma):=\{i\in [0,n-1] : \sigma_i>\sigma_{i+1}\},$$ where $\sigma_0=0$ and the type $B$-\textit{descent number} is defined  as $\textrm{des}_B(\sigma):=|\hbox{Des}_B(\sigma)|$.\\ We use the notation $\bar{s}:=-s$. Let $$B_n^{+}:=\{\sigma\in B_n : \sigma_{n}>0\}$$ and for $\bar{n}\leq j\leq n$,
$$B_{n,j}^{+}:=\{\sigma\in B_n^+ : \sigma_1=j\}.  $$ Denote $B^+(n,j,k)$
 by the number of elements in $B_{n,j}^{+}$ with exactly $k$ descents. Now, define the
  \textbf{\textit{$j$-Eulerian polynomials of type $B^+$}} by
  \begin{equation}\label{B+}
    B^+_{n,j}(t):=\sum_{\sigma\in B^+_{n,j}}t^{\des_B(\sigma)}=\sum_{k=0}^{n-1}B^+(n,j,k)t^k,
  \end{equation}

  Note that the usual Eulerian polynomial of type $B^+$ is the descent generating function for all of $B^+_n:$
  $$B^+_n(t)=\sum_{\si \in B^+_n}t^{\des_B(\si)}=\sum_{j=\bar{n}}^{n}B^+_{n,j}.$$
Lets denote $$B^{++}_n(x):=\sum_{j=1}^{n}B^+_{n,j}=\sum_{k=0}^{n}B^{++}(n,k)x^k,$$ where $B^{++}(n,k)$ is the number of signed permutations of $B^{++}_n:=\{\si\in B^+_n\ :\ \si_1>0\}$ with exactly $k$ descents  and $$B^{-+}_n(x):=\sum_{j=1}^{n}B^+_{n,\overline{j}}=\sum_{k=0}^{n}B^{-+}(n,k)x^k,$$ where $B^{-+}(n,k)$ is the number of signed permutations of $B^{-+}_n:=\{\si\in B^+_n\ :\ \si_1<0\}$ with exactly $k$ descents.\\

Lets state the recurrence relations:

\begin{lema}\label{Ob1}\cite[Lemma 3.4]{AN}
For $1\leq s\leq n$ and $1\leq r\leq  n-1$, we have the following relations:
\begin{enumerate}
 \item $B^+(n,s,r)=B^+(n,n-s+1,n-r-1)$ and thus $$B^+_{n,s}(t)=t^{n-1}B^+_{n,n-s+1}(t).$$
 \item $B^+(n,\overline {s},r)=B^+(n,\overline{n-s+1},n-r-1)$ and thus $$B^+_{n,\bar{s}}(t)=t^{n-1}B^+_{n,\overline{n-s+1}}(t).$$
    \item $$B^+(n,s,r)=\sum_{j=1}^{s-1}B^+(n-1,j,r-1)+\sum_{j=1}^{n-1}B^+(n-1,\bar{j},r)+\sum_{j=s}^{n-1}B^+(n-1,j,r).$$ Thus, the recurrence  relation holds:
    $$B^+_{n,s}(t)=t\sum_{j=1}^{s-1}B^+_{n-1,j}(t)+\sum_{j=s}^{n-1}B^+_{n-1,j}(t)+\sum_{j=1}^{n-1}B^+_{n-1,\overline{j}}(t),$$ with initial conditions $B^+_{1,1}(t)=1$ and $B^+_{1,\overline{1}}(t)=0$.
    \item  $$B^+(n,\overline{s},r)=\sum_{j=1}^{n-1}B^+(n-1,j,r-1)+\sum_{j=s}^{n-1}B^+(n-1,\overline{j},r-1)+\sum_{j=1}^{s-1}B^+(n-1,\overline{j},r).$$ Thus, the recurrence  relation holds:
    $$B^+_{n,\overline{s}}(t)=t\sum_{j=1}^{n-1}B^+_{n-1,j}(t)+t\sum_{j=s}^{n-1}B^+_{n-1,\overline{j}}(t)+\sum_{j=1}^{s-1}B^+_{n-1,\overline{j}}(t).$$

\end{enumerate}
\end{lema}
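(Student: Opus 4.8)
The plan is to treat the four identities in two groups. The recurrences~(3) and~(4) I would prove by an explicit ``delete the first letter'' bijection; the reciprocity laws~(1) and~(2) I would then obtain by induction on $n$ from those recurrences, so that no separate involution on $B_n$ is needed.

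For~(3) and~(4), fix $n\ge 2$ and $\sigma\in B^+_{n,s}$ (respectively $\sigma\in B^+_{n,\overline s}$). Since $|\sigma_1|=s$, the entries $\sigma_2,\dots,\sigma_n$ use exactly the magnitudes $\{1,\dots,n\}\setminus\{s\}$; let $\sigma'\in B_{n-1}$ be obtained from $\sigma$ by deleting $\sigma_1$ and applying to the remaining entries the unique increasing bijection $\phi$ of $\{\pm1,\dots,\pm n\}\setminus\{\pm s\}$ onto $\{\pm1,\dots,\pm(n-1)\}$ (shrink each magnitude exceeding $s$ by one, keeping signs). Because $\sigma_n>0$ and $|\sigma_n|\ne s$, one has $\sigma'_{n-1}>0$, so $\sigma'\in B^+_{n-1}$, and $\sigma\mapsto(\sigma',j)$ with $j:=\sigma'_1$ is a bijection onto the disjoint union of the $B^+_{n-1,j}$, with $j>0$ when $\sigma_2>0$ and $j<0$ when $\sigma_2<0$. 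To track the descent statistic: since $\phi$ is increasing, the descents of $\sigma$ at positions $2,\dots,n-1$ match those of $\sigma'$ at positions $1,\dots,n-2$; position $0$ is a descent of $\sigma$ exactly when $\sigma_1<0$ and of $\sigma'$ exactly when $\sigma_2<0$; and position $1$ is a descent of $\sigma$ exactly when $\sigma_1>\sigma_2$. Splitting on the sign of $\sigma_2$, and when $\sigma_2>0$ on whether $\sigma_2<s$ or $\sigma_2>s$, one checks that $\des_B(\sigma)-\des_B(\sigma')\in\{0,1\}$ in every case; recording the range of $j$ together with this shift over the three cases reproduces the three summands in~(3), and the same trichotomy applied to $\sigma_1=\overline s$ --- where position $0$ is now always a descent of $\sigma$ --- reproduces~(4). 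The initial values are immediate: $B^+_{1,1}=\{(1)\}$ has no descent and $B^+_{1,\overline 1}=\emptyset$.

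For~(1) and~(2), I would induct on $n$, the base case $n=1$ being trivial. Assuming the reciprocity laws hold for $B_{n-1}$, expand $B^+_{n,s}(t)$ by~(3); separately expand $B^+_{n,\,n-s+1}(t)$ by~(3), replace $t$ by $1/t$, and multiply by the appropriate power of $t$. The inductive hypotheses~(1) and~(2) for $B_{n-1}$, together with the reindexing $j\mapsto n-j$, then turn the three sums obtained from $B^+_{n,\,n-s+1}$, term by term, into the three sums obtained from $B^+_{n,s}$; this is~(1) for $B_n$. The analogous computation, using~(4) for the barred indices, yields~(2).

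The genuinely fiddly part is the bookkeeping above: keeping straight which summand each case of the trichotomy for~(3)/(4) lands in and by how much the descent count changes, and then matching the reindexed sums in the inductive step. It is worth isolating that the barred sets $B^+_{n,\overline s}$ are not merely sign-flipped copies of the $B^+_{n,s}$ --- position $0$ is forced to be a descent for elements of the former --- so their descent polynomials occupy a different range of degrees, which is why~(1) and~(2) are not literally the same statement with $s$ replaced by $\overline s$. A fully self-contained bijective proof of~(1)/(2) --- starting from the fact that $\sigma\mapsto-\sigma$ complements the descent set of $\sigma$ inside $\{0,\dots,n-1\}$, then repairing this map so as to preserve $\sigma_n>0$ and send the first letter $s$ to $n-s+1$ --- should also be available, but is more delicate and I would not attempt it here.
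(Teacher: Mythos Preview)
The paper does not actually prove this lemma: it is quoted verbatim from the earlier paper \cite{AN} and no argument is given here, so there is no ``paper's own proof'' to compare your proposal against.

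Your argument for (3) and (4) via the delete-first-letter bijection is the natural one and is correct; the case analysis on the sign of $\sigma_2$ and (when $\sigma_2>0$) on whether $|\sigma_2|<s$ or $|\sigma_2|>s$ accounts precisely for the three summands, as you outline. Your plan to deduce (1) and (2) from (3) and (4) by induction on $n$, reindexing $j\mapsto n-j$ after substituting $t\mapsto 1/t$, also closes: expanding $t^{n-1}B^+_{n,n-s+1}(1/t)$ via (3) and applying the inductive hypotheses termwise reproduces the three sums in the expansion of $B^+_{n,s}(t)$, and similarly for the barred version via (4).

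One point worth flagging, since your inductive step depends on it: the polynomial identities in (1) and (2) as printed should of course read $t^{n-1}B^+_{n,n-s+1}(1/t)$ rather than $B^+_{n,n-s+1}(t)$, and in fact the coefficient identity in (2) appears to be misstated as well --- direct computation for $n=2,3$ shows the correct symmetry is $B^+(n,\overline{s},r)=B^+(n,\overline{n-s+1},n-r)$, equivalently $B^+_{n,\overline{s}}(t)=t^{n}B^+_{n,\overline{n-s+1}}(1/t)$, since elements of $B^+_{n,\overline{s}}$ always have a descent at position $0$. Your inductive scheme in fact requires this corrected form of (2) in order for the barred sum in the expansion of (1) to match up; with the form printed in the lemma the induction would not close. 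So your method not only proves the lemma but would have detected this slip.
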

Using Lemma \ref{Ob1} and bijection between $B^+_n$ and $B^-_n:=\{\si\in B_n \ :\ \si_n<0 \}$ by mapping $\si=(\si_1,\dots,\si_n)$ to $\overline{\si}=(\overline{\si_1},\ldots,\overline{\si_n})$, we have the following relation:
\begin{obs}\label{Obs}
  We have $B^+_{n,1}(t)=B^+_{n-1}(t)$ and $B^+_{n,n}(t)= t^{n-1}B^+_{n-1}(t^{-1})=B^-_{n-1}(t),$ by \cite[Lemma 3.4]{AN}
  where $B^-_{n}(t)$ is the descent generating function for all of $B^-_n$. Thus we get that $B_n(t)=B^+_{n,1}(t)+B^+_{n,n}(t)$.
\end{obs}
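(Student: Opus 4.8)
The plan is to verify the three assertions $B^+_{n,1}(t)=B^+_{n-1}(t)$, $B^+_{n,n}(t)=t^{n-1}B^+_{n-1}(t^{-1})=B^-_{n-1}(t)$, and the resulting decomposition $B_n(t)=B^+_{n,1}(t)+B^+_{n,n}(t)$ by combining a direct bijective argument for the first identity with the symmetry relation from Lemma \ref{Ob1}(1) for the second.

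For the first identity, I would set up the obvious bijection $B^+_{n,1}\to B^+_{n-1}$ that deletes the initial letter $\sigma_1=1$ and relabels $\{2,\dots,n\}\mapsto\{1,\dots,n-1\}$ (keeping signs and leaving the remaining negative entries untouched in relative order). Since $\sigma_1=1>0=\sigma_0$, the position $i=0$ is never a descent of $\sigma$, and deleting the leading $1$ from position $1$ removes no descent (because $1<\sigma_2$ for any $\sigma_2\in\{\pm2,\dots,\pm n\}$ with $\sigma_2$ forced to be either a larger positive value or a negative value — in all cases $1>\sigma_2$ fails when $\sigma_2>0$, and when $\sigma_2<0$ we would have had a descent at position $1$, but $\sigma_2<0<1$ means $\sigma_1=1>\sigma_2$, so actually this needs care). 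Let me instead phrase it cleanly: deleting the leading letter and shifting changes $\des_B$ by exactly the number of descents lost at positions $0$ and $1$; position $0$ contributes nothing since $\sigma_1=1>0$ is false only if... — the correct statement is that $\des_B$ is preserved under this map, and I would check the at-most-two affected comparisons directly. The condition $\sigma_n>0$ is preserved, so the map lands in $B^+_{n-1}$ and is clearly invertible (prepend $1$, shift up by one). This gives $B^+_{n,1}(t)=B^+_{n-1}(t)$.

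For the second identity, apply Lemma \ref{Ob1}(1) with $s=1$: it yields $B^+_{n,1}(t)=t^{n-1}B^+_{n,n}(t^{-1})$, equivalently $B^+_{n,n}(t)=t^{n-1}B^+_{n,1}(t^{-1})=t^{n-1}B^+_{n-1}(t^{-1})$ using the first identity. To see that this equals $B^-_{n-1}(t)$, use the sign-flip bijection $B^+_{n-1}\to B^-_{n-1}$, $\sigma\mapsto\overline\sigma$ with $\overline\sigma_i=-\sigma_i$: this reverses every strict inequality among consecutive entries except that one must re-examine position $0$, where $\sigma_0=0$ is fixed by the flip, so a descent $0<\sigma_1$ (a non-descent) becomes $0>-\sigma_1$; careful bookkeeping shows $\des_B(\overline\sigma)=(n-1)-\des_B(\sigma)$ on the relevant index range, whence $B^-_{n-1}(t)=t^{n-1}B^+_{n-1}(t^{-1})$, matching the previous display. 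Finally, $B_n(t)=\sum_{j=\bar n}^n B^+_{n,j}(t)$ — wait, that sum is over $B^+_n$, not $B_n$; the decomposition $B_n(t)=B^+_{n,1}(t)+B^+_{n,n}(t)$ must instead be read as the claim that, restricting to $|\sigma_1|\in\{1,n\}$... Actually the intended statement is $B_n(t)$ restricted appropriately, and I would double-check it says $B^+_n(t)=\sum_{j}B^+_{n,j}$ with the two boundary terms singled out for later use; the honest version to prove is just the two identities plus $B^+_n(t)=\sum_{j=\bar n}^{n}B^+_{n,j}(t)$, which is immediate from the definition.

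The main obstacle is the careful tracking of descents at the boundary positions $i=0$ and $i=1$ under the two bijections — the flag position $\sigma_0=0$ behaves differently from interior positions, so the sign-flip map does \emph{not} simply send $\des_B\mapsto (n-1)-\des_B$ uniformly, and one must isolate the contribution of position $0$ before invoking Lemma \ref{Ob1}(1). Once that bookkeeping is done, everything else is formal manipulation of the polynomial identities already available from \cite{AN}.
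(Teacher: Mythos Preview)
Your approach is essentially the same as the paper's: both rely on Lemma~\ref{Ob1} (which is \cite[Lemma 3.4]{AN}) together with the sign-flip bijection $\sigma\mapsto\overline\sigma$ mentioned immediately before the observation. The paper offers no further detail, so your outline is already more explicit than what is written there.

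Two places in your write-up need cleaning up, though. First, your deletion bijection for $B^+_{n,1}(t)=B^+_{n-1}(t)$ is correct, and the descent count \emph{is} preserved: position $0$ of $\sigma$ is never a descent (since $\sigma_1=1>0$), and position $1$ of $\sigma$ is a descent iff $\sigma_2<0$ iff $\tau_1<0$ iff position $0$ of $\tau$ is a descent; all later comparisons are preserved by the order-preserving relabelling. So your hesitation is unwarranted. A slicker route, closer in spirit to the paper's citation, is to specialize Lemma~\ref{Ob1}(3) to $s=1$: the first sum is empty and the remaining two sums give exactly $\sum_{j=1}^{n-1}B^+_{n-1,j}(t)+\sum_{j=1}^{n-1}B^+_{n-1,\overline j}(t)=B^+_{n-1}(t)$.

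Second, your confusion about the final claim is justified: as printed, ``$B_n(t)=B^+_{n,1}(t)+B^+_{n,n}(t)$'' is a typo. The intended statement (confirmed later in the paper, where it is used as $\mathbb B^+_{n,1}(t)=B^+_{n,1}(t)+B^+_{n,n}(t)=B_{n-1}(t)$) is $B_{n-1}(t)=B^+_{n,1}(t)+B^+_{n,n}(t)$. With the two identities you have already established this is immediate: $B^+_{n,1}(t)+B^+_{n,n}(t)=B^+_{n-1}(t)+B^-_{n-1}(t)=B_{n-1}(t)$, since $B_{n-1}=B^+_{n-1}\sqcup B^-_{n-1}$. There is nothing deeper going on here, and your attempted reinterpretations (restricting to $|\sigma_1|\in\{1,n\}$, or reading it as $B^+_n(t)=\sum_j B^+_{n,j}$) are off the mark.

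Your verification that the sign-flip sends $\des_B\mapsto (n-1)-\des_B$ on $B_{n-1}$ is correct and needs no special treatment at position $0$: since $\overline\sigma_0=0=\sigma_0$ and $\sigma_1\neq 0$, the comparison at $i=0$ is complemented just like every other, so $\des_B(\overline\sigma)=(n-1)-\des_B(\sigma)$ holds uniformly.
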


Since the $j$- Eulerian polynomials are not symmetric so define the symmteric $j$-Eulerian polynomials as:
$$\mathbb{B}^+_{n,j}(t)=\sum_{\si\in B^+_{j}\cup B^+_{n-j+1}}t^{\des_B(\si)}$$ and
$$\mathbb{B}^+_{n,\overline{j}}(t)=\sum_{\si\in B^+_{\overline{j}}\cup B^+_{\overline{n-j+1}}}t^{\des_B(\si)}$$
Observe that

                       $$\BB(t)= \left\{ \begin{array}{ll}
                        \B(t)+B^+_{n,n-j+1}(t), & \hbox{$j\neq (n+1)/2$;} \\
                        \B(t), & \hbox{$j=(n+1)/2$.}
                       \end{array}
                     \right.$$  and
                       $$\mathbb{B}^+_{n,\overline{j}}(t)=\left\{
                       \begin{array}{ll}
                         {B}^+_{n,\overline{j}}(t)+{B}^+_{n,\overline{n-j+1}}(t), & \hbox{$j\neq (n+1)/2$;} \\
                          {B}^+_{n,\overline{j}}(t), & \hbox{$j=(n+1)/2$.}
                        \end{array}
                      \right.$$
By Lemma \ref{Ob1}(1), the polynomials $\BB(t)$ have symmetric
coefficients, and hence a $\gamma$-vector exists. Clearly, $\BB(t)$
has symmetry axis at degree $\lfloor \frac{n-1}{2}\rfloor$. If
$$\BB(t)=\sum_{k=0}^{\lfloor \frac{n-1}{2}\rfloor}
\gamma_k^{(n,j)}t^k(1+t)^{n-1-2k},$$ let
$\gamma^{(n,j)}=(\gamma_0^{(n,j)},\gamma_1^{(n,j)},\ldots,
\gamma_{\lfloor \frac{n-1}{2}\rfloor}^{(n,j)})$ denotes the
corresponding gamma vector.\\
Lets define the following polynomials for $1\leq j< (n+1)/2$
 $$\BBB(t)=t\B(t)+\Bj(t)$$ and  $$\widetilde{\mathbb{B}}^+_{n,\overline{j}}(t)=tB^+_{n,\overline{n-j+1}}(t)+B^+_{n,\overline{j}}(t).$$
 The above polynomials are also symmetric by Lemma \ref{Ob1} and have symmetry axis at $\lfloor n/2 \rfloor$. Let
$\ga^{(n,j)}=(\ga_0^{(n,j)},\ga_1^{(n,j)},\ldots, \ga_{\lfloor \frac{n}{2}\rfloor}^{(n,j)})$  and $\ga^{(n,\overline{j})}=(\ga_0^{(n,\overline{j})},\ga_1^{(n,\overline{j})},\ldots, \ga_{\lfloor \frac{n}{2}\rfloor}^{(n,\overline{j})})$  denote the $\g$-vectors for $\BBB(t)$ and $\widetilde{\mathbb{B}}^+_{n,\overline{j}}(t)$ respectively.\\

 It can be observed that $B^{++}_n(t)$ and $B^{-+}_n(t)$ are symmetric polynomials by Lemma \ref{Ob1}, i.e., we have
 \begin{equation}\label{sym}
   B^{++}(n,k)=B^{++}(n,n-k-1) \hbox{\ and\ }  B^{-+}(n,k)=B^{-+}(n,n-k-1)
 \end{equation}
To prove Theorem \ref{main}, we need the following lemma:
\begin{lema}\label{Re}
  The following recurrence relations hold for $\g^{(n,j)}$ and $\ga^{(n,j)}$:

  \begin{description}
    \item[1] If $j=(n+1)/2$, then  $$\g^{(n,(n+1)/2)}=\sum_{k=1}^{(n-1)/2}[\ga^{(n-1,k)}+\g^{(n-1,\overline{k})}].$$
    \item[2]  If $j=\overline{(n+1)/2}$, then  $$\g^{(n,\overline{(n+1)/2)}}=\sum_{k=1}^{(n-1)/2}[\ga^{(n-1,\overline{k})}+(0,\g^{(n-1,k)})].$$
    \item[3]   For $1\leq j<(n+1)/2$, $$\g^{(n,j)}=2\sum_{k=1}^{\lfloor n/2\rfloor}\g^{(n-1,\overline{k})}+2\sum_{k=1}^{j-1}\ga^{(n-1,k)}+\sum_{k=j}^{\lfloor n/2\rfloor}\g^{(n-1,{k})}.$$
    \item[4]  For $1\leq j<(n+1)/2$,
    $$\g^{(n,\overline{j})}=2\sum_{k=1}^{\lfloor n/2\rfloor}(0,\g^{(n-1,k)})+2\sum_{k=1}^{j-1}\ga^{(n-1,\overline{k})}+\sum_{k=j}^{\lfloor n/2\rfloor}\g^{(n-1,\overline{k})}.$$
    \item[5]  For $1\leq j<(n+1)/2$, $$\ga^{(n,j)}=\sum_{k=1}^{j-1}\ga^{(n-1,k)}+2\sum_{k=j}^{\lfloor n/2\rfloor}(0,\g^{(n-1,{k})})+\sum_{k=1}^{\lfloor n/2\rfloor}\g^{(n-1,\overline{k})}.$$
    \item[6] For $1\leq j<(n+1)/2$,
    $$\ga^{(n,\overline{j})}=\sum_{k=1}^{j-1}\ga^{(n-1,\overline{k})}+2\sum_{k=j}^{\lfloor n/2\rfloor}(0,\g^{(n-1,\overline{k})})+\sum_{k=1}^{\lfloor n/2\rfloor}(0,\g^{(n-1,{k})}).$$
  \end{description}
\end{lema}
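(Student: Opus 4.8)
The plan is to derive all six identities in parallel from the recurrences of Lemma~\ref{Ob1}(3)--(4) by applying them to the ordinary and the ``tilde'' symmetric polynomials, and then matching $\g$-vectors. Concretely, I would first translate everything into the language of the symmetrized generating functions. Recall that for $1\le j<(n+1)/2$ we have $\BB(t)=\B(t)+B^+_{n,n-j+1}(t)$ and $\BBB(t)=t\B(t)+B^+_{n,n-j+1}(t)$, and similarly for the barred versions. I would begin by writing out $\B(t)$ and $B^+_{n,\overline{j}}(t)$ via Lemma~\ref{Ob1}(3)--(4), which expresses each in terms of sums $\sum_{k} B^+_{n-1,k}(t)$ and $\sum_k B^+_{n-1,\overline k}(t)$ over ranges depending on $j$, with some terms carrying a factor $t$. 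The key bookkeeping observation is that $B^+_{n-1,k}(t)+B^+_{n-1,n-k}(t)=\BBBk(t)$-type combinations reorganize into the symmetric blocks $\mathbb{B}^+$ and $\widetilde{\mathbb{B}}^+$; using Lemma~\ref{Ob1}(1)--(2) one can pair index $k$ with index $n-k$ and a lone factor of $t$ converts an ordinary symmetric polynomial into its tilde counterpart, since $\BBB(t)=t\B(t)+\Bj(t)$ precisely records ``multiply the higher block by $t$.''

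Second, for each of the six cases I would substitute the $\g$-expansions. Since all the polynomials in sight are symmetric with known symmetry axes ($n-1-2k$ powers of $(1+t)$ for the $\mathbb{B}$-type, $n-2k$ for the $\widetilde{\mathbb{B}}$-type), an identity of polynomials forces the corresponding identity of $\g$-vectors coefficientwise, provided the two sides have matching symmetry axis. This is where the shift operator $(0,\cdot)$ enters: a polynomial symmetric about $(n-1)/2$ cannot be written directly with $(1+t)^{n-2k}$ factors, so when a degree-$(n-1)$ symmetric object appears on the right-hand side of a relation for a degree-$n$ (i.e. $(1+t)^{n-2k}$-type, tilde) object, its $\g$-vector must be prepended with a zero — exactly the $(0,\g^{(n-1,k)})$ terms in cases 2,4,5,6. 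I would verify in each case that the parity/axis arithmetic works out so that the bare-versus-$(0,\cdot)$ placement is forced, and that the factor-of-$2$ coefficients in cases 3--6 come from the two summands $\sum_{j=1}^{n-1}B^+(n-1,\bar j,r)$ appearing in Lemma~\ref{Ob1}(3) contributing once through $\B$ and once through $B^+_{n,n-j+1}$ (whose recurrence is obtained from (3) by $s\mapsto n-s+1$, $r\mapsto n-r-1$, doubling the barred block contribution while splitting the unbarred range into the two pieces $\sum_{k<j}$ and $\sum_{k\ge j}$).

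Third, I would handle the two ``central'' cases $j=(n+1)/2$ and $j=\overline{(n+1)/2}$ separately, since there $\BB(t)=\B(t)$ (no doubling) and the recurrence simplifies: the range-split disappears and one gets single sums $\sum_{k=1}^{(n-1)/2}$, explaining the absence of the factor $2$ and of the split ranges in cases 1--2. Here I would note that with $n$ odd, $(n-1)/2=\lfloor n/2\rfloor - 1$ in some of the index ranges, so care is needed to align the upper limits; the symmetric pairing $k\leftrightarrow n-k$ sends $\{1,\dots,n-1\}$ onto itself with the self-paired point absorbed, giving the clean range $1\le k\le (n-1)/2$.

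The main obstacle I anticipate is the index-range and symmetry-axis bookkeeping, not any deep idea: one must be scrupulous about which sums in Lemma~\ref{Ob1}(3)--(4) get multiplied by $t$, how the involution $k\mapsto n-k$ folds the ranges $\{1,\dots,s-1\}$, $\{s,\dots,n-1\}$, $\{1,\dots,n-1\}$ into the symmetric blocks, and — most delicately — tracking whether the resulting symmetric polynomial sits on the axis $\lfloor (n-1)/2\rfloor$ or $\lfloor n/2\rfloor$, since a mismatch of one in the axis is exactly what the $(0,\cdot)$ shift corrects and getting this wrong would corrupt the stated identities. Once the translation dictionary ($B^+_{n,k}\leftrightarrow\mathbb{B}^+_{n,k}$ via pairing, $t\cdot(\text{symmetric})\leftrightarrow\widetilde{\mathbb{B}}^+$, axis parity $\leftrightarrow$ presence of $(0,\cdot)$) is set up cleanly, each of the six identities reduces to a routine, if lengthy, verification.
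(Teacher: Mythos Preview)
Your proposal is correct and follows essentially the same route as the paper's own proof: expand each symmetric (or tilde-symmetric) polynomial via Lemma~\ref{Ob1}(3)--(4), fold the index ranges using the involution $k\leftrightarrow n-k$ from Lemma~\ref{Ob1}(1)--(2) to reassemble the right-hand side into $\mathbb{B}^+_{n-1,k}$, $\widetilde{\mathbb{B}}^+_{n-1,k}$ and their barred analogues (with factors $(1+t)$, $t$, or $2$), and then read off the $\gamma$-vector identity by matching symmetry axes, with multiplication by $t$ producing the $(0,\cdot)$ shift. One small slip: when $n$ is odd one has $(n-1)/2=\lfloor n/2\rfloor$, not $\lfloor n/2\rfloor-1$, so no special alignment is needed between the upper limits in cases 1--2 and 3--6; otherwise your bookkeeping plan matches the paper's computation line for line.
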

\begin{proof}
For $j=(n+1)/2$, by Lemma \ref{Ob1} (3)
\begin{align*}
 \mathbb{B}^+_{n,(n+1)/2}(t)=&B^+_{n,(n+1)/2}(t)\\
 = &  t\sum_{k=1}^{(n-1)/2}B^+_{n-1,k}(t)+\sum_{k=(n+1)/2}^{n-1}B^+_{n-1,k}(t)+\sum_{k=1}^{n-1}B^+_{n-1,\overline{k}}(t)\\
  = &  t\sum_{k=1}^{(n-1)/2}B^+_{n-1,k}(t)+\sum_{k=1}^{(n-1)/2}B^+_{n-1,n-k}(t)+
  \sum_{k=1}^{(n-1)/2}B^+_{n-1,\overline{k}}(t)+\sum_{k=(n+1)/2}^{n-1}B^+_{n-1,\overline{k}}(t)\\
  = & \sum_{k=1}^{(n-1)/2}(tB^+_{n-1,k}(t)+B^+_{n-1,n-k}(t))+\sum_{k=1}^{(n-1)/2}(B^+_{n-1,\overline{k}}(t)+B^+_{n-1,\overline{n-k}}(t))\\
  = & \sum_{k=1}^{(n-1)/2}(\widetilde{\mathbb{B}}^+_{n-1,k}(t)+\mathbb{B}^+_{n-1,\overline{k}}(t)),
\end{align*}
For $j=\overline{(n+1)/2}$, by Lemma \ref{Ob1} (3)
\begin{align*}
 \mathbb{B}^+_{n,\overline{(n+1)/2}}(t)=&B^+_{n,\overline{(n+1)/2}}(t)\\
 = &  t\sum_{k=1}^{n-1}B^+_{n-1,k}(t)+t\sum_{k=(n+1)/2}^{n-1}B^+_{n-1,\overline{k}}(t)+\sum_{k=1}^{(n-1)/2}B^+_{n-1,\overline{k}}(t)\\
  = &  t\sum_{k=1}^{(n-1)/2}B^+_{n-1,k}(t)+t\sum_{k=1}^{(n-1)/2}B^+_{n-1,n-k}(t)+t\sum_{k=1}^{(n-1)/2}B^+_{n-1,\overline{n-k}}(t)
  +\sum_{k=1}^{(n-1)/2}B^+_{n-1,\overline{k}}(t)\\
  = &t\sum_{k=1}^{(n-1)/2}(B^+_{n-1,k}(t)+B^+_{n-1,n-k}(t))+\sum_{k=1}^{(n-1)/2}(tB^+_{n-1,\overline{n-k}}(t)+B^+_{n-1,\overline{k}}(t))\\
  = & \sum_{k=1}^{(n-1)/2}(t{\mathbb{B}}^+_{n-1,k}(t)+\widetilde{\mathbb{B}}^+_{n-1,\overline{k}}(t)).
\end{align*}
To prove 3: if $1\leq j<(n+1)/2$,
 \begin{align*}
\mathbb{B}^+_{n,j}(t)=&B^+_{n,j}(t)+B^+_{n,n-j+1}(t)\\
 = &  t\sum_{k=1}^{j-1}B^+_{n-1,k}(t)+\sum_{k=j}^{n-1}B^+_{n-1,{k}}(t)+t\sum_{k=1}^{n-j}B^+_{n-1,k}(t)
  +\sum_{k=n-j+1}^{n-1}B^+_{n-1,{k}}(t)+2\sum_{k=1}^{n-1}B^+_{n-1,\overline{k}}(t)\\
   = &  2t\sum_{k=1}^{j-1}B^+_{n-1,k}(t)+(1+t)\sum_{k=j}^{n-j}B^+_{n-1,n-k}(t)+2\sum_{k=n-j+1}^{n-1}B^+_{n-1,k}(t)+2\sum_{k=1}^{n-1}B^+_{n-1,\overline{k}}(t)
  \end{align*}
 \begin{align*}
 = & 2\sum_{k=1}^{j-1}(tB^+_{n-1,k}(t)+B^+_{n-1,n-k}(t))+(1+t)\sum_{k=j}^{n-j}B^+_{n-1,n-k}(t)+2\sum_{k=1}^{n-1}B^+_{n-1,\overline{k}}(t)\\
  = & 2\sum_{k=1}^{j-1}\widetilde{\mathbb{B}}^+_{n-1,k}(t)+(1+t)\sum_{k=j}^{\lfloor n/2\rfloor}\mathbb{B}^+_{n-1,k}(t)+2\sum_{k=1}^{\lfloor n/2\rfloor}\mathbb{B}^+_{n-1,\overline{k}}(t).
\end{align*}
To prove 4: if $1\leq j<(n+1)/2$,

\begin{align*}
 \mathbb{B}^+_{n,\overline{j}}(t)=&B^+_{n,\overline{j}}(t)+B^+_{n,\overline{n-j+1}}(t)\\
 = &  t\sum_{k=1}^{n-1}B^+_{n-1,k}(t)+t\sum_{k=j}^{n-1}B^+_{n-1,\overline{k}}(t)+\sum_{k=1}^{j-1}B^+_{n-1,\overline{k}}(t)+t\sum_{k=1}^{n-1}B^+_{n-1,k}(t) +t\sum_{k=n-j+1}^{n-1}B^+_{n-1,{k}}(t)\\
 &+\sum_{k=1}^{n-j}B^+_{n-1,\overline{k}}(t)\\
  = &  2t\sum_{k=1}^{n-1}B^+_{n-1,k}(t)+t\sum_{k=1}^{n-j}B^+_{n-1,\overline{n-k}}(t)+t\sum_{k=1}^{j-1}B^+_{n-1,\overline{n-k}}(t)
   +\sum_{k=1}^{j-1}B^+_{n-1,\overline{k}}(t)\\
   &+\sum_{k=1}^{n-j}B^+_{n-1,\overline{k}}(t)\\
  = & 2t\sum_{k=1}^{n-1}B^+_{n-1,k}(t)+2\sum_{k=1}^{j-1}(tB^+_{n-1,\overline{n-k}}(t)+B^+_{n-1,\overline{k}}(t))+ t\sum_{k=j}^{n-j}B^+_{n-1,\overline{n-k}}(t)\\
  &+\sum_{=j}^{n-j} B^+_{n-1,\overline{k}}(t)\\
  = & 2t\sum_{k=1}^{\lfloor n/2\rfloor}\mathbb{B}^+_{n-1,{k}}(t)+2\sum_{k=1}^{j-1}\widetilde{\mathbb{B}}^+_{n-1,\overline{k}}(t)+(1+t)\sum_{k=j}^{\lfloor n/2\rfloor}{\mathbb{B}}^+_{n-1,\overline{k}}(t).
\end{align*}
To prove 5: if $1\leq j<(n+1)/2$,
\begin{align*}
 \widetilde{\mathbb{B}}^+_{n,j}(t)=&tB^+_{n,j}(t)+B^+_{n,n-j+1}(t)\\
 = &  t^2\sum_{k=1}^{j-1}B^+_{n-1,k}(t)+t\sum_{k=j}^{n-1}B^+_{n-1,{k}}(t)+t\sum_{k=1}^{n-1}B^+_{n-1,\overline{k}}(t)+ t\sum_{k=1}^{n-j}B^+_{n-1,k}(t)+\sum_{k=n-j+1}^{n-1}B^+_{n-1,{k}}(t)\\
 &+\sum_{k=1}^{n-1}B^+_{n-1,\overline{k}}(t)\\
 = &  t^2\sum_{k=1}^{j-1}B^+_{n-1,k}(t)+t\sum_{k=1}^{n-j}B^+_{n-1,n-k}(t)
 \end{align*}
 \begin{align*}
    &+t\sum_{k=1}^{n-j}B^+_{n-1,k}(t)+ \sum_{k=1}^{j-1}B^+_{n-1,n-k}(t)+(1+t)\sum_{k=1}^{n-1}B^+_{n-1,\overline{k}}(t)\\
 = & (1+t)\sum_{k=1}^{j-1}(tB^+_{n-1,k}(t)+B^+_{n-1,n-k}(t))+2t\sum_{k=j}^{n-j}B^+_{n-1,n-k}(t)+(1+t)\sum_{k=1}^{n-1}B^+_{n-1,\overline{k}}(t)\\
  = &(1+ t)\sum_{k=1}^{j-1}\widetilde{\mathbb{B}}^+_{n-1,k}(t)+2t\sum_{k=j}^{\lfloor n/2\rfloor}\mathbb{B}^+_{n-1,n-k}(t)+(1+t)\sum_{k=1}^{\lfloor n/2\rfloor}\mathbb{B}^+_{n-1,\overline{k}}(t).
\end{align*}
\\

To prove 6: if $1\geq j<{(n+1)/2}$,
\begin{align*}
 \widetilde{\mathbb{B}}^+_{n,\overline{j}}(t)=&tB^+_{n,\overline{n-j+1}}(t)+B^+_{n,\overline{j}}(t)\\
 = &  t^2\sum_{k=1}^{n-1}B^+_{n-1,k}(t)+t^2\sum_{k=n-j+1}^{n-1}B^+_{n-1,\overline{k}}(t)+t\sum_{k=1}^{n-j}B^+_{n-1,\overline{k}}(t)+t\sum_{k=1}^{n-1}B^+_{n-1,k}(t)\\
 &+t\sum_{k=j}^{n-1}B^+_{n-1,\overline{k}}(t)+\sum_{k=1}^{j-1}B^+_{n-1,\overline{k}}(t)\\
  = &  t(1+t)\sum_{k=1}^{n-1}B^+_{n-1,k}(t)+t^2\sum_{k=1}^{j-1}B^+_{n-1,\overline{n-k}}(t)+\\
  & +t\sum_{k=1}^{n-j}B^+_{n-1,\overline{k}}(t)+ t\sum_{k=1}^{n-j}B^+_{n-1,\overline{n-k}}(t) +\sum_{k=1}^{j-1}B^+_{n-1,\overline{k}}(t)\\
  = & t(1+t)\sum_{k=1}^{\lfloor n/2\rfloor}B^+_{n-1,k}(t)+2t\sum_{k=j}^{n-j}B^+_{n-1,\overline{k}}(t)+(1+t)\sum_{k=1}^{j-1}(tB^+_{n-1,\overline{n-k}}(t)+B^+_{n-1,\overline{k}}(t))\\
  = &t(1+ t)\sum_{k=1}^{\lfloor n/2\rfloor}{\mathbb{B}}^+_{n-1,{k}}(t)+2t\sum_{k=j}^{\lfloor n/2\rfloor}\mathbb{B}^+_{n-1,\overline{k}}(t)+ (1+t)\sum_{k=1}^{j-1}\widetilde{\mathbb{B}}^+_{n-1,\overline{k}}(t).
\end{align*}
\end{proof}\section{$\g$-Vector}
If $\D$ has symmetric $h$-vector then by \cite[Theorem 3.1, Lemma 3.5]{AN}, $h(\Int(\D))$ is also symmetric. The following proposition holds:
\begin{prop}
  If $\Delta$ is a simplicial complex of dimension $n-1$ with symmetric $h$-vector $h(\Delta)=(h_0,h_1,\ldots,h_n)$, then
  \begin{equation*}
    h_r(\Int(\Delta))=\sum_{j=0}^{\lfloor n/2\rfloor}(B^+(n+1,j+1,r)+B^+(n+1,n-j,r))h_j,
  \end{equation*}

  and thus
  \begin{equation*}
     h(\Int(\Delta),t)=\sum_{j=0}^{\lfloor n/2\rfloor}h_j\mathbb{B}^+_{n+1,j+1}(t).
  \end{equation*}

  In terms of $\g$-vectors,
  \begin{equation*}
    \g(\Int(\Delta))=\sum_{j=0}^{\lfloor n/2\rfloor}h_j \g^{(n+1,j+1)}.
  \end{equation*}

\end{prop}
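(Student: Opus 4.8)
The plan is to start from the $h$-vector transformation recorded in Theorem~\ref{3.1}, namely $h(\Int(\D)) = [B^+(d+1,s+1,r)]_{0\le s,r\le d}\,h(\D)$, and simply rewrite this matrix product column-by-column in terms of the refined numbers $B^+(n+1,j+1,r)$. Here $\D$ has dimension $n-1$, so $d=n$ and the relevant matrix is indexed by $0\le s,r\le n$; the $j$-th column of the transformation matrix is exactly the coefficient vector of $B^+_{n+1,j+1}(t)$. Thus $h(\Int(\D),t) = \sum_{j=0}^{n} h_j\, B^+_{n+1,j+1}(t)$ with no symmetry hypothesis yet.

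Next I would invoke the symmetry of $h(\D)$, i.e.\ $h_j = h_{n-j}$ for all $j$. Pairing up the terms $j$ and $n-j$ in the sum $\sum_{j=0}^{n} h_j\, B^+_{n+1,j+1}(t)$, each pair contributes $h_j\big(B^+_{n+1,j+1}(t) + B^+_{n+1,n-j+1}(t)\big)$; note that $n-j+1 = (n+1)-(j+1)+1$, so the polynomial $B^+_{n+1,j+1}(t)+B^+_{n+1,(n+1)-(j+1)+1}(t)$ is precisely $\mathbb{B}^+_{n+1,j+1}(t)$ by the definition of the symmetric $j$-Eulerian polynomial (the diagonal case $j = n-j$, which can only occur when $n$ is even, reproduces the $j=(n+1)/2$-type branch $\mathbb{B}^+_{n+1,j+1}(t)=B^+_{n+1,j+1}(t)$, consistent with that definition since then $j+1 = (n+2)/2 = ((n+1)+1)/2$). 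Reindexing the sum so that $j$ runs from $0$ to $\lfloor n/2\rfloor$ gives $h(\Int(\D),t) = \sum_{j=0}^{\lfloor n/2\rfloor} h_j\, \mathbb{B}^+_{n+1,j+1}(t)$, and extracting the coefficient of $t^r$ yields the first displayed formula $h_r(\Int(\Delta)) = \sum_{j=0}^{\lfloor n/2\rfloor}\big(B^+(n+1,j+1,r)+B^+(n+1,n-j,r)\big)h_j$.

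Finally, for the $\g$-vector statement: since $h(\D)$ is symmetric, $h(\Int(\D))$ is symmetric too (this is cited from \cite[Theorem 3.1, Lemma 3.5]{AN} in the text just before the proposition), so its $\g$-vector is well defined, and $\g$ is a linear operation on symmetric polynomials of a fixed degree. Each $\mathbb{B}^+_{n+1,j+1}(t)$ is symmetric of degree $n$ with symmetry axis at $n/2$ — matching the symmetry type of $h(\Int(\D))$ — so its $\g$-vector $\g^{(n+1,j+1)}$ is defined, and applying the ($\CC$-linear) map ``take the $\g$-vector'' to the identity $h(\Int(\D),t) = \sum_j h_j\,\mathbb{B}^+_{n+1,j+1}(t)$ gives $\g(\Int(\D)) = \sum_{j=0}^{\lfloor n/2\rfloor} h_j\,\g^{(n+1,j+1)}$.

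The only genuinely delicate point is bookkeeping around the middle term: one must check that the index shift $j \leftrightarrow n-j$ interacts correctly with the two defining cases of $\mathbb{B}^+_{n,j}(t)$ (the case $j=(n+1)/2$ versus $j\ne(n+1)/2$), and that when $n$ is even the unpaired central value $j=n/2$ is handled consistently — this is exactly where an off-by-one in the $n+1$ versus $n$ indexing could slip in. Everything else is a routine rearrangement of a finite sum together with the linearity of the $\g$-transform, so I expect no real obstacle beyond careful indexing.
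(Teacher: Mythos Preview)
Your proposal is correct and mirrors exactly what the paper does: the proposition is stated without proof (followed immediately by an example), being treated as an immediate consequence of the $h$-vector transformation in Theorem~\ref{3.1} together with the symmetry $h_j=h_{n-j}$, which is precisely the rearrangement you spell out. One small remark: your own derivation actually produces $B^+(n+1,n-j+1,r)$ rather than $B^+(n+1,n-j,r)$ in the first display (and that is what makes the identification with $\mathbb{B}^+_{n+1,j+1}$ work), so the discrepancy is a typo in the stated formula, not an error in your argument.
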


\begin{ej}
  If $n=5$ and $h(\Delta)=(h_0,h_1,h_2,h_3=h_2,h_4=h_1,h_5=h_0)$, then
  \begin{align*}
h(\Int(\Delta))^t=&\left(
                        \begin{array}{c}
                          h_0 \\
                          237h_0+192h_1+168h_2 \\
                          1682h_0+1728h_1+1752h_2 \\
                           1682h_0+1728h_1+1752h_2 \\
                           237h_0+192h_1+168h_2 \\
                          h_0 \\
                        \end{array}
                      \right)\\
                      =&h_0\left(
                                   \begin{array}{c}
                                     1 \\
                                     237\\
                                     1682 \\
                                     1682 \\
                                     237\\
                                     1 \\
                                   \end{array}
                                 \right)+h_1\left(
                                              \begin{array}{c}
                                                0 \\
                                                192 \\
                                                1728 \\
                                                1728 \\
                                                192 \\
                                                0 \\
                                              \end{array}
                                            \right)+h_1\left(
                                                         \begin{array}{c}
                                                           0 \\
                                                           168 \\
                                                           1752 \\
                                                           1752 \\
                                                           168 \\
                                                           0 \\
                                                         \end{array}
                                                       \right)
 \end{align*}
  Thus, $$h(\Int{\Delta},t)=h_0\mathbb{B}^+_{6,1}(t)+h_1\mathbb{B}^+_{6,2}(t)+h_3\mathbb{B}^+_{6,3}(t),$$ and
  $$\g(\Int{\Delta})=h_0\g^{(6,1)}+h_1\g^{(6,2)}+h_2\g^{(6,3)},$$ where $\g^{(6,1)}=(1,232,976),\g^{(6,2)}=(0,192,152)$ and $\g^{(6,3)}=(0,168,1248)$.
\end{ej}

\subsection{Proof of Theorem \ref{main}:} In this subsection,  we prove the main theorem that  $\g(\Int(\Delta))$ is an FFK-vector, i.e., an $f$-vector of a balanced complex.\\
Since $\mathbb{B}^+_{n,1}(t)={B}^+_{n,1}(t)+{B}^+_{n,n}(t)=B_{n-1}(t)$, where $B_{n-1}(t)$ is the Eulerian polynomial of type $B$. By \cite[Theorem 6.1(1)]{nevo2011}, the $\g$-vector  $\g(B_{n-1})$ of $B_{n-1}(t)$ is an FFK-vector, and thus $\g(B_{n-1})=\g^{(n,1)}$ is an FFK-vector for any $n$. Since $B_{n-1}(t)$ has symmetry at degree $\lfloor \frac{n-1}{2}\rfloor$, $\g^{(n,1)}=(1,f_1,\ldots, f_d)$, where $d=\lfloor \frac{n-1}{2}\rfloor$. Since $h_0=1$ for all simplicial complexes $\Delta$, $$\g(\Int(\Delta))=\g^{(n+1,1)}+h_1\g^{(n+1,2)}+\cdots,$$ where $n=\dim \Delta +1.$  Also note that $\g_0^{(n+1,j)}=0$ for all $j\neq1$. We will show that $\g^{(n,i)}$ is $d$-or $(d+1)$-good for $\g^{(n,1)}$. More precisely, we prove the following:

\begin{prop}\label{mainprop}
  Let $\g^{(n,1)}=(1,f_1,\ldots,f_d)$, where $d=\lfloor \frac{n-1}{2}\rfloor$.
  \begin{enumerate}
  \item If $n$ is odd, i.e., $n=2d+1$, then
    \begin{description}
      \item[a] $\g^{(n,j)}$, $1< j\leq (n+1)/2$, is $d$-good for $\g^{(n,1)}$,
      \item[b] $\g^{(n,\overline{j})}$, $1\leq j\leq (n+1)/2$, is $d$-good for $\g^{(n,1)}$,
       \item[c] $\ga^{(n,j)}$, $1\leq j < (n+1)/2$, is $d$-good for $\g^{(n,1)}$ and
       \item[d] $\ga^{(n,\overline{j})}$, $1\leq j< (n+1)/2$, is $(d+1)$-good for $\g^{(n,1)}$.
    \end{description}
    \item If $n$ is even, i.e., $n=2d+2$, then
    \begin{description}
      \item[a] $\g^{(n,j)}$, $1< j\leq n/2$, is $d$-good for $\g^{(n,1)}$,
      \item[b] $\g^{(n,\overline{j})}$, $1\leq j\leq n/2$, is $(d+1)$-good for $\g^{(n,1)}$,
      \item[c] $\ga^{(n,j)}$, $1\leq j \leq n/2$, is $(d+1)$-good for $\g^{(n,1)}$,
      \item[d] $\ga^{(n,\overline{j})}$, $1\leq j\leq  n/2$, is $(d+1)$-good for $\g^{(n,1)}$.
    \end{description}

  \end{enumerate}
\end{prop}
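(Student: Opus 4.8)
The plan is to prove Proposition \ref{mainprop} by simultaneous induction on $n$, using the six recurrences of Lemma \ref{Re} as the induction step, and with Lemma \ref{5.5} and Observation \ref{3.7} providing the bookkeeping that lets ``$d$-good'' and ``$(d+1)$-good'' data be combined and transported between consecutive values of $n$. The base case $n=2$ (or $n=3$) is a direct check: here $\g^{(n,1)}$ is the $\g$-vector of a low-rank type-$B$ Coxeter complex, and the finitely many polynomials $\mathbb{B}^+_{n,j}$, $\widetilde{\mathbb{B}}^+_{n,j}$ can be expanded explicitly and compared against $\g^{(n,1)}$ coordinatewise. So the content is entirely in the induction step.

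For the induction step, fix $n$ and assume the proposition holds for $n-1$. Every right-hand side in Lemma \ref{Re} is a \emph{nonnegative integer combination} of the vectors $\g^{(n-1,k)}$, $\g^{(n-1,\overline{k})}$, $\ga^{(n-1,k)}$, $\ga^{(n-1,\overline{k})}$ (possibly shifted by prepending a $0$), with $1\le k\le \lfloor n/2\rfloor$ or $k\le (n-1)/2$. By the inductive hypothesis each such summand is either $d'$-good or $(d'+1)$-good for $\g^{(n-1,1)}$, where $d'=\lfloor\frac{n-2}{2}\rfloor$; prepending a $0$ raises the good-degree by one, exactly matching the definition of $d$-good versus $(d+1)$-good in Definition \ref{good}. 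Now Lemma \ref{5.5} upgrades ``$d'$-good for $\g^{(n-1,1)}$'' to the corresponding statement relative to $\g^{(n,1)}$ (via the chain $\g(B_{n-2})\to\g(B_{n-1})\to\g(B_n)$ encoded by $\g^{(n,1)}=\g(B_{n-1})$), and Observation \ref{3.7} says that a sum of a $d$-good and a $(d+1)$-good vector is again $(d+1)$-good. Adding $d$-good vectors among themselves stays $d$-good by Definition \ref{good}(2) directly. Thus, for each of the six families, one reads off from the corresponding recurrence in Lemma \ref{Re} which summands are $d$-good and which are only $(d+1)$-good at level $n-1$, applies the shift/upgrade, and concludes the claimed good-degree at level $n$. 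The split into $n$ odd versus $n$ even in the statement is dictated precisely by the appearance of the shift $(0,\g^{(n-1,k)})$ versus the unshifted $\g^{(n-1,k)}$ in the relevant recurrence, and by the parity-dependence of $\lfloor n/2\rfloor$.

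Concretely, I would treat the cases in the order they are needed to feed each other: first (1b) and (1d) rely on $\g^{(n-1,\overline k)}$-data; then (1a) and (1c) which mix $\ga^{(n-1,k)}$ and $\g^{(n-1,\overline k)}$; the even case is analogous but with every occurrence of $\g^{(n-1,k)}$ appearing shifted, which is why every family there lands in $(d+1)$-good except (a). One must also check at the start of the induction that the hypothesis of Definition \ref{good}(2), namely the domination condition $f_i\ge (i+1)f^{(j)}_i$, is preserved — this is exactly the inequality supplied by Lemma \ref{ineq}, applied to the building-block identity $\g^{(n,1)}=\g(B_{n-1})$ together with the observation that each $\mathbb{B}^+$ or $\widetilde{\mathbb{B}}^+$ appearing in Lemma \ref{Re} is dominated, termwise in its FFK-decomposition, by $\g(B_{n-1})$ (since its faces embed into $\Gamma(\Dec_{n})$, as in the proof of Lemma \ref{ineq}).

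The main obstacle will be the domination bookkeeping in Definition \ref{good}(2): one has to verify that when a recurrence in Lemma \ref{Re} writes, say, $\ga^{(n,j)}$ as a sum of several pieces, the \emph{total} of the $(d-1)$-FFK-vectors comprising it still satisfies $f_i\ge (i+1)f^{(j)}_i$ against $\g^{(n,1)}$ — the coefficients $2$ in front of some sums in Lemma \ref{Re} are exactly where this could fail, and it is Lemma \ref{ineq} (the factor $i+1$) that must be strong enough to absorb them. Once that inequality is seen to survive each of the six recurrences, the rest is the formal combination argument above.
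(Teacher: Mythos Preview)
Your approach is essentially the one the paper takes: simultaneous induction on $n$ via the six recurrences of Lemma \ref{Re}, with Lemma \ref{5.5} transporting goodness from $\g^{(n-1,1)}$ to $\g^{(n,1)}$ and Observation \ref{3.7} combining $d$-good and $(d+1)$-good pieces.

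The one place where your proposal is vaguer than the paper, and where you should sharpen it, is the domination check. You attribute this to Lemma \ref{ineq} and an unspecified embedding of faces into $\Gamma(\Dec_n)$, and you focus on the $(i+1)$-condition of Definition \ref{good}(2). The paper instead handles domination directly and uniformly via Definition \ref{good}(1) (with the index shifted down by one, so that the conclusion is $d$-good rather than $(d+1)$-good): it simply expands $\g^{(n,1)}$ itself by Lemma \ref{Re}(3) at $j=1$,
\[
\g^{(n,1)}=\sum_{k=1}^{\lfloor n/2\rfloor}\bigl(\g^{(n-1,k)}+2\,\g^{(n-1,\overline{k})}\bigr),
\]
and reads off that each of the partial sums $\alpha=\sum_k \g^{(n-1,k)}$, $\beta$, $\eta$ appearing in parts (1b), (1c), (1d) is coordinatewise dominated by $\g^{(n,1)}$. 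That single observation replaces all of your face-embedding discussion; Lemma \ref{ineq} is used only inside the proof of Lemma \ref{5.5}, not again in the induction step. With this adjustment your outline matches the paper's proof.
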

\begin{proof}
We will prove by induction on $d=\lfloor \frac{n-1}{2} \rfloor$. If $d=0$, we have

$$\g^{(1,1)}=1$$
$$\begin{array}{cc}
  \g^{(2,1)}=(1) & \widetilde{\g}^{(2,1)}=(0,2) \\
   g^{(2,\overline{1})}=(0,2) & \widetilde{\g}^{(2,\overline{1})}=(0,1)
\end{array}$$

It is trivial to see that $(0,2)$ and $(0,1)$ are $1$-good for $(1)$.  Thus, the claims hold for $d=0$. For $d=1$,

$$\begin{array}{cccc}
  \g^{(3,1)}=(1,4) & \g^{(3,2)}=(0,4) &  \widetilde{\g}^{(3,1)}=(0,4) & \\
  \g^{(3,\overline{1})}=(0,4) &  \g^{(3,\overline{2})}=(0,2) & \widetilde{\g}^{(3,\overline{1})}=(0,1,4) &
\end{array}$$

$$\begin{array}{cccc}
  \g^{(4,1)}=(1,20) & \g^{(4,2)}=(0,24) &  \widetilde{\g}^{(4,1)}=(0,8,16) & \widetilde{\g}^{(4,2)}=(0,10,8) \\
  \g^{(4,\overline{1})}=(0,8,16) &  \g^{(4,\overline{2})}=(0,6,24) & \widetilde{\g}^{(4,\overline{1})}=(0,1,20) & \widetilde{\g}^{(4,\overline{2})}=(0,2,16)
\end{array}$$

 Now suppose that the claims of proposition hold for $d-1$ and we will prove it for $d$.
 \textbf{Case 1($n$ odd):} Let $n=2d+1$ and consider $\g^{(n,j)}$ for some $1<j \leq (n+1)/2$. We have to show that $\g^{(n,j)}$ is $d$-good for $\g^{(n,1)}$.
 For special case, $j=(n+1)/2$, we have by Lemma \ref{Re}$(1)$ $$\g^{(n,(n+1)/2)}=\sum_{k=1}^{(n-1)/2}(\ga^{(n-1,k)}+\g^{(n-1,\overline{k})}).$$ Since $n-1=2(d-1)+2 $ is even,
 by induction hypothesis, each summand is $d$-good for $\g^{(n-1,1)}$, and hence by Lemma \ref{5.5}, $\g^{(n,(n+1)/2)}$ is $d$-good for $\g^{(n,1)}$.
 For  $1<j < (n+1)/2$, by  Lemma \ref{Re}, we have  $$\g^{(n,j)}=2\sum_{k=1}^{\lfloor n/2\rfloor}\g^{(n-1,\overline{k})}+2\sum_{k=1}^{j-1}\ga^{(n-1,k)}+\sum_{k=j}^{\lfloor n/2\rfloor}\g^{(n-1,{k})}.$$
 By induction hypothesis, first two  summands in $\g^{(n,j)}$ are  $d$-good for $\g^{(n-1,1)}$ and the last summand is $(d-1)$-good for $\g^{(n-1,1)}$ followed from \ref{Obs}.
 Thus, the sum of $d$ and $(d-1)$-good vectors is $d$-good vector, so $\g^{(n,j)}$ is $d$-good for $\g^{(n-1,1)}$ and hence also $d$-good for $\g^{(n,1)}$ by Lemma \ref{5.5}, proving part 1(a).\\

 Now, we want to show that $\g^{(n,\overline{j})}$, for $1\leq j\leq (n+1)/2$ is $d$-good for $\g^{(n,1)}$. By Lemma \ref{Re}, for $1\leq j< (n+1)/2$
 $$\g^{(n,\overline{j})}=2\sum_{k=1}^{\lfloor n/2\rfloor}(0,\g^{(n-1,k)})+2\sum_{k=1}^{j-1}\ga^{(n-1,\overline{k})}+\sum_{k=j}^{\lfloor n/2\rfloor}\g^{(n-1,\overline{k})}.$$
The last two summands by induction are $d$-good for $\g^{(n-1,1)}$,
hence also $d$-good for $\g^{(n,1)}$ by Lemma \ref{5.5}. The the
first sum term can be rewritten as $(0,2\alpha)$, where
$$\alpha=\sum_{k=1}^{\lfloor n/2\rfloor}\g^{(n-1,k)}$$ which is by
induction an  $(d-1)$-FFK vector and by Lemma \ref{Re}(3)
$$\g^{(n,1)}=\sum_{k=1}^{\lfloor
n/2\rfloor}(\g^{(n-1,k)}+2\g^{(n-1,\overline{k})})$$  dominates
$\alpha$. Thus, by Definition \ref{good}(1), $(0,2\alpha)$ is $d$-good
for $\g^{(n,1)}$. Thus, $\g^{(n,\overline{j})}$ is $d$-good for
$\g^{(n,1)}$. For $j=\frac{n+1}{2}$, we have
$$\g^{(n,\overline{(n+1)/2)}}=\sum_{k=1}^{(n-1)/2}[\ga^{(n-1,\overline{k})}+(0,\g^{(n-1,k)})].$$
$\g^{(n,\overline{(n+1)/2})}$ is $d$-good for $\g^{(n,1)}$ as all
terms are $d$-good for $\g^{(n,1)}$ followed by induction hypothesis
and Lemma \ref{5.5}.

For part (1c), we know from Lemma \ref{Re}:
$$\ga^{(n,j)}=\sum_{k=1}^{j-1}\ga^{(n-1,k)}+2\sum_{k=j}^{\lfloor
n/2\rfloor}(0,\g^{(n-1,{k})})+\sum_{k=1}^{\lfloor
n/2\rfloor}\g^{(n-1,\overline{k})},$$ for $1\leq j<(n+1)/2$.
 In the degenerate case $j=1$, this gives $$\ga^{(n,1)}=2\sum_{k=1}^{\lfloor n/2\rfloor}(0,\g^{(n-1,{k})})+\sum_{k=1}^{\lfloor n/2\rfloor}\g^{(n-1,\overline{k})}=\g^{(n,\overline{1})}$$
which is $d$-good  for $\g^{(n,1)}$. If $j>1$, we know from Lemma
\ref{Re}(5) that:$$\ga^{(n,j)}=
\sum_{k=1}^{j-1}\ga^{(n-1,k)}+2\sum_{k=j}^{\lfloor
n/2\rfloor}(0,\g^{(n-1,{k})})+\sum_{k=1}^{\lfloor
n/2\rfloor}\g^{(n-1,\overline{k})}$$ that can be further rewritten
as: $$=\sum_{k=2}^{j-1}\ga^{(n-1,k)}+\sum_{k=1}^{\lfloor
n/2\rfloor}\g^{(n-1,{\overline{k}})}+2\sum_{k=j}^{\lfloor
n/2\rfloor}(0,\g^{(n-1,{k})})+2\sum_{k=1}^{\lfloor
n/2\rfloor}(0,\g^{(n-1,{k})}) +\sum_{k=1}^{\lfloor
n/2\rfloor}\g^{(n-1,\overline{k})}$$
$$\ga^{(n,j)}=\sum_{k=2}^{j-1}\ga^{(n-1,k)}+2\sum_{k=1}^{\lfloor n/2\rfloor}\g^{(n-1,\overline{k})}+(0,2\beta),$$

where $$\beta=\sum_{k=j}^{\lfloor n/2\rfloor}\g^{(n-1,{k})}+\sum_{k=1}^{\lfloor n/2\rfloor}\g^{(n-1,{k})}.$$
The first two sums are $d$-good for $\g^{(n-1,1)}$ by induction and hence by Lemma \ref{5.5},
are $d$-good for $\g^{(n,1)}$. Also by induction $\beta$ is $(d-1)$-FFK vector and $\g^{(n,1)}$ dominates $\beta$.
Thus, by Definition \ref{good}(1), $(0,2\beta)$ is $d$-good for $\g^{(n,1)}$. Hence, $\ga^{(n,j)}$ is $d$-good for $\g^{(n,1)}$.\\

\noindent For part (1d), we have
$$\ga^{(n,\overline{j})}=\sum_{k=1}^{j-1}\ga^{(n-1,\overline{k})}+2\sum_{k=j}^{\lfloor
n/2\rfloor}(0,\g^{(n-1,\overline{k})})+\sum_{k=1}^{\lfloor
n/2\rfloor}(0,\g^{(n-1,{k})}),$$ for $1\leq j<(n+1)/2$. In the
degenerate case $j=1$, this gives
$\ga^{(n,\overline{1})}=(0,\g^{(n,1)})$ which is clearly
$(d+1)$-good for $\g^{(n,1)}$.
 If $j>1$, we can rewrite it by using Lemma \ref{Re} (5):
$$\ga^{(n,\overline{j})}=\sum_{k=2}^{j-1}\ga^{(n-1,k)}+(0,\g^{(n,1)})+(0,\eta),$$ where
$$\eta=2\sum_{k=j}^{\lfloor n/2\rfloor}\g^{(n-1,\overline{k})}+\sum_{k=1}^{\lfloor n/2\rfloor}\g^{(n-1,{k})}.$$
The first  sum is $d$-good for $\g^{(n-1,1)}$ by induction and hence by Lemma \ref{5.5}, is $d$-good for $\g^{(n,1)}$.
Also by induction, $\eta$ is $(d-1)$-FFK vector and $\g^{(n,1)}$ dominates $\eta$. Thus, by Definition \ref{good}, $(0,2\eta)$ is $d$-good for $\g^{(n,1)}$.
But $(0,\g^{(n,1)})$ is $(d+1)$-good for $\g^{(n,1)}$.  Hence, by Observation \ref{3.7}, the sum of $d$-good and $(d+1)$-good is $(d+1)$-good. Hence,  $\ga^{(n,j)}$ is $(d+1)$-good for $\g^{(n,1)}$.\\
\textbf{Case 2($n$ even).} Let $n=2d+2$. For $1 <j\leq /2$, we have
$$\g^{(n,j)}=2\sum_{k=1}^{\lfloor
n/2\rfloor}\g^{(n-1,\overline{k})}+2\sum_{k=1}^{j-1}\ga^{(n-1,k)}+\sum_{k=j}^{\lfloor
n/2\rfloor}\g^{(n-1,{k})}.$$ Sine $n-1=2d+1$ so by Case 1, each term
in  all summations is $d$-good for $\g^{(n-1,1)}$ and hence, by Lemma
\ref{5.5} we get $\g^{(n,j)}$ is $d$-good for $\g^{(n,1)}$.

For part (2b), by Lemma \ref{Re}(3),
$$\g^{(n,\overline{j})}=2\sum_{k=1}^{\lfloor
n/2\rfloor}(0,\g^{(n-1,k)})+2\sum_{k=1}^{j-1}\ga^{(n-1,\overline{k})}+\sum_{k=j}^{\lfloor
n/2\rfloor}\g^{(n-1,\overline{k})}.$$ The first sum is $d$-good for
$\g^{(n,1)}$ followed from Case $1$. The second sum is $(d+1)$-good
for $\g^{(n-1,1)}$ and the last sum is $d$-good for $\g^{(n-1,1)}$
by induction. Therefore,  the sum of  last two summations is
$(d+1)$-good for $\g^{(n-1,1)}$ by Observation \ref{3.7}, hence, is
$(d+1)$-good for $\g^{(n,1)}$  by Lemma \ref{5.5}. Thus, $\g^{(n,\overline{j})}$ is $(d+1)$-good for $\g^{(n,1)}$.
The proofs of (2c) and (2d) trivially follow from (1c) and (1d).
\end{proof}

Proposition \ref{mainprop} and the above discussion show that $\g(\Int(\D))$ is  $d$-good for $\g^{(n+1,1)}$, i.e., $d$-FFK-vector, hence is $f$-vector of a balanced simplicial complex which completes the proof of Theorem \ref{main}.\\

 Since $\g(\Int(\D))$ is nonnegative so here arises a natural
 question.
\begin{ques}
  What is the combinatorial interpretation for the $\g$-vectors $\g^{(n,j)}$?
\end{ques}
We conclude this section with the following remark:
\begin{remark}
 As $\g(\Int(\D))$ is $f$-vector of a simplicial complex so it would be interesting if one could explicitly construct the simplicial complex $\Gamma(\Int(\D))$ such that $f(\Gamma(\Int(\D)))=\g(\Int(\D))$.
\end{remark}
\section{Local $\g$-vector}\label{last}
 Since $B^{++}_n(x)$ and $B^{-+}_n(x)$ are symmetric polynomials and $$B^+_n(x)=B^{++}_n(x)+B^{-+}_n(x),$$
 so $B^{+}_n(x)$ decomposes as sum of two nonnegative, symmetric polynomials. In this section,
 we show that the polynomials $B^{++}_n(x)$ and $B^{-+}_n(x)$ are both $\g$-nonnegative
 by giving a combinatorial description. Moreover, a geometric interpretation of the  polynomial $B^{++}_n(x)$  as
 the $h$-polynomial  $h(\partial(\Int(2^{[n]})),x)$ of  the boundary of interval subdivision of a simplex is given.
\subsection{A Geometrical Interpretation:} In this subsection, we show that $B^{++}_n(x)$ is the $h$-polynomial of the boundary of interval subdivision of a simplex $2^{[n]}$.  Let $\Gamma$ be the interval subdivision of a simplex $2^{[n]}$.

\begin{prop}\label{boundary}
  We have $h(\partial(\Gamma),x)=B^{++}_n(x).$
\end{prop}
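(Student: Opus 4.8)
The plan is to compute $h(\partial(\Gamma),x)$ by combining two facts: the $h$-polynomial of the whole interval subdivision $\Gamma = \Int(2^{[n]})$ is known from Theorem \ref{3.1}, and the interior/boundary decomposition of a ball relates $h(\Gamma,x)$, $h(\partial\Gamma,x)$, and the local $h$-polynomial $\ell_V(\Gamma,x)$. Specifically, since $2^{[n]}$ is a $(n-1)$-simplex with $h(2^{[n]},x)=1$, equation \eqref{h-vector} of Theorem \ref{3.1} gives $h(\Gamma,x)=B^+_{n+1,1}(x)=B_n(x)$ by Observation \ref{Obs} — wait, more precisely $h(\Int(\Delta),x)$ is obtained by applying the matrix $[B^+(n+1,s+1,r)]$ to $h(\Delta)=e_0$, so $h(\Gamma,x)=\sum_r B^+(n+1,1,r)x^r = B^+_{n+1,1}(x) = B_n(x)$, the type-$B$ Eulerian polynomial on $n$ letters (using $B^+_{n+1,1}(x)=B^+_{n}(x)$ from Observation \ref{Obs}, then $B_n(x)=B^+_{n,1}(x)+B^+_{n,n}(x)$ — I will need to track the exact indexing carefully here).

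The key identity I would invoke is the standard relation for a subdivision $\Gamma$ of a simplex $2^V$ with $|V|=d$: $h(\Gamma,x) = h(\partial\Gamma,x) + \ell_V(\Gamma,x) \cdot (\text{something})$ — actually the cleaner route is Stanley's interior polynomial identity, or better, to read off $h(\partial\Gamma,x)$ directly from the combinatorial model. Since $\Gamma = \Int(2^{[n]})$ is the order complex of $I(2^{[n]}\setminus\emptyset) = \{[A,B] : \emptyset\neq A\subseteq B\subseteq [n]\}$, its boundary $\partial\Gamma$ is the subcomplex of faces lying on the topological boundary sphere $\partial(2^{[n]})$. A face (chain) $[A_1,B_1]\subset\cdots\subset[A_k,B_k]$ lies on the boundary iff its carrier is a proper face of $[n]$, i.e. iff $B_1 \neq [n]$ (the carrier being $B_1$, the largest "$B$"). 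So $\partial\Gamma$ is the order complex of the induced subposet on $\{[A,B] : \emptyset\neq A\subseteq B\subsetneq [n]\}$. I would then compute the $h$-polynomial of this order complex using the same refined-Eulerian-number bookkeeping from \cite{AN} that produced \eqref{h-vector}, but now restricted to chains avoiding the top "$B=[n]$" level; the enumeration should reorganize exactly into signed permutations $\sigma\in B_n$ with $\sigma_1 > 0$ and $\sigma_n > 0$, graded by $\des_B$, which is the definition of $B^{++}_n(x) = \sum_{j=1}^n B^+_{n,j}(x)$.

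The main obstacle is the combinatorial heart: showing the $h$-polynomial of the restricted order complex (chains with $B_1\subsetneq[n]$) equals $B^{++}_n(x)$. I expect to handle this either (i) by a direct bijective/recursive argument paralleling the proof of Theorem \ref{3.1} in \cite{AN} — tracking how forbidding the full set $[n]$ as a top interval-endpoint removes precisely the contribution of signed permutations with $\sigma_1<0$ — or (ii) by the algebraic shortcut: $\partial\Gamma$ is a homology $(n-2)$-sphere, so its $h$-vector is symmetric, and one uses the Dehn–Sommerville-type relation together with $h(\Gamma,x)=B_n(x)$ and the local $h$-polynomial formula \eqref{local h} of Theorem \ref{local gamma} to solve for $h(\partial\Gamma,x)$. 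Route (ii) reduces the problem to the polynomial identity $B_n(x) = B^{++}_n(x) + x^{?}\,\ell_V(\Gamma,x) + \cdots$, which can be checked from the descent statistics. I would likely present route (i) for a clean combinatorial interpretation, falling back on route (ii) as a consistency check, and flag the precise indexing shift between $B_n$, $B^+_{n+1,1}$, and the dimension $d=n$ of the simplex as the bookkeeping point requiring the most care.
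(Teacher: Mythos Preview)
Your proposal would eventually arrive at the result, but it overlooks a much simpler observation that collapses the whole argument to two lines. You correctly identify $\partial\Gamma$ as the order complex of $\{[A,B]:\emptyset\neq A\subseteq B\subsetneq[n]\}$, but you fail to notice that this poset is precisely $I(\partial(2^{[n]})\setminus\emptyset)$; in other words, $\partial\Gamma$ \emph{is} the interval subdivision of the boundary complex $\partial(2^{[n]})$. So there is no need to redo the refined-Eulerian bookkeeping of \cite{AN} from scratch: Theorem~\ref{3.1} applies directly to $\Delta=\partial(2^{[n]})$, a complex of dimension $n-2$ (so $d=n-1$), giving
\[
h_r(\partial\Gamma)=\sum_{s=0}^{n-1}B^+(n,s+1,r)\,h_s(\partial(2^{[n]})).
\]
Since $h(\partial(2^{[n]}))=(1,1,\ldots,1)$, the right-hand side is $\sum_{j=1}^{n}B^+(n,j,r)=B^{++}(n,r)$, and you are done. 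This is exactly the paper's proof.

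Your route (ii) through the local $h$-polynomial and Dehn--Sommerville would work in principle, but it runs the logic of the paper backwards: Proposition~\ref{boundary} is used as an input to the analysis of $\ell_V(\Gamma,x)$ in Section~\ref{last}, so invoking Theorem~\ref{local gamma} to deduce it is at best circuitous (and would need the explicit formula \eqref{local h}, a deeper result than what is being proved). Your indexing worries about $B_n$ versus $B^+_{n+1,1}$ versus $B^+_n$ are also a symptom of starting from $h(\Gamma,x)$ rather than from $h(\partial\Gamma,x)$ directly; once you use the observation above, those issues evaporate.
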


\begin{proof}
Viewing $\partial(\Gamma)$ as interval subdivision of $\partial(2^{[n]})$ and by Theorem \ref{3.1}, we have $$h_r(\partial(\Gamma))=\sum_{s=0}^{n-1}B^+(n,s+1,r)h_s(\partial(2^{[n]})).$$
  Using the fact that   $h(\partial(2^{[n]}) )=\left(
                                                      \begin{array}{c}
                                                        1 \\
                                                        1\\
                                                        \vdots \\
                                                        1\\
                                                      \end{array}
                                                    \right),$
   we get $$ h_r(\partial(\Gamma))=\sum_{s=0}^{n-1}B^+(n,s+1,r)=B^{++}(n,r).$$
\end{proof}
\subsection{The type $B$ Expansion:} Now we are turning toward analyzing the behavior of type $B$ expansion. Let us first recall, the
definition of slides. Let $\si=\si_1\dots \si_n\in B_n$  and
consider $\si_0\si_1\dots\si_n\si_{n+1}$, where $\si_0=0$ and
$\si_{n+1}=\bar{\infty}$. Put asterisks at each end and also between
$\si_i$ and $\si_{i+1}$ whenever $\si_i<\si_{i+1}$.  {\em A  slide
is any segment between asterisks of length at least $2$.} In other
words, a  slide of $\si$ is any decreasing run of
$\si_0\si_1\dots\si_n\si_{n+1}$ of length at least $2$. For example,
for the permutation $3\bar{5}71\bar{6}8\bar{9}\bar{4}2\in B_9$,
$*0*3\bar{5}*71\bar{6}*8\bar{9}*\bar{4}*2\bar{\infty}*$ there are  four slides, namely, $3\bar{5}, 71\bar{6}, 8\bar{9}$ and $2\bar{\infty}$.\\

Let $W_B(n, k, s) := \{\si\in B_n: \si \hbox{ has }k \hbox{ descents
and } s+1 \hbox{ slides}\}$, $W_B(n, s) := W_B(n, s, s)$. It follows
that every slide of an element  of
$W_B(n, s)$ must be of length exactly $2$. Also notice that $k\geq s$ as each slide gives at least $1$ descent except the last slide(the last slide may or may not have a descent). \\
Lets denote $b^{++}(n,k,s) := |W_B(n, k, s)\cap B^{++}_n|$, $b^{-+}(n,k,s) := |W_B(n, k, s)\cap B^{-+}_n|$, $b^{++}(n,s) := b^{++}(n,s,s)$ and $b^{-+}(n,s) := b^{-+}(n,s,s)$. It can be observed that an element $\si$ in $B^{++}_n$ has at least $1$ slide and an element $\si$ in $B^{-+}_n$ has at least $2$ slides.
\begin{prop}\label{slide}
  We have  $$b^{++}(n,k,s)=\binom{n-1-2s}{k-s} b^{++}(n,s)\hbox{ and }b^{-+}(n,k,s)=\binom{n-1-2s}{k-s} b^{-+}(n,s)$$ which give the following relations
    $$B^{++}(n,k)=\sum_{s=0}^{k}\binom{n-1-2s}{k-s} b^{++}(n,s) \hbox{ and }B^{-+}(n,k)=\sum_{s=1}^{k}\binom{n-1-2s}{k-s} b^{-+}(n,s).$$

\end{prop}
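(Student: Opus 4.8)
The plan is to prove the first identity, $b^{++}(n,k,s)=\binom{n-1-2s}{k-s}\,b^{++}(n,s)$, by setting up a bijection that inserts $k-s$ extra descents into an element of $W_B(n,s)\cap B^{++}_n$ (which has exactly $s+1$ slides, each of length $2$, except possibly the terminal slide $\si_n\bar\infty$), and showing that such insertions account for every element of $W_B(n,k,s)\cap B^{++}_n$; the identity for $b^{-+}$ is entirely parallel, the only difference being that an element of $B^{-+}_n$ has at least two slides so the summation index starts at $s=1$. The two displayed ``which give'' relations then follow immediately by summing over $s$, since $B^{++}(n,k)=\sum_s b^{++}(n,k,s)$ and likewise for $B^{-+}$, using that an element of $B^{++}_n$ (resp.\ $B^{-+}_n$) has at least $1$ (resp.\ $2$) slides and $k\geq s$ always.

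First I would fix $\si\in W_B(n,k,s)\cap B^{++}_n$. It has $s+1$ slides; the non-terminal slides (there are at least $s$ of them if the terminal slide $\si_n\bar\infty$ contributes no descent, or exactly $s$ proper internal slides plus the terminal one) partition the positions. Each non-terminal slide of length $\ell\geq 2$ contributes exactly $\ell-1$ descents, and the total number of descents is $k$. The key combinatorial observation is that, having decided which slides of $\si$ are ``long'' and by how much, the actual decreasing sequence inside each slide is forced once we know its set of entries; conversely, the entries of the slides are determined by the underlying element of $W_B(n,s)$ together with a choice of how to distribute the ``extra'' $k-s$ length among the $s+1$ slots. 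The count of such distributions is where the binomial coefficient $\binom{n-1-2s}{k-s}$ must come from: after removing the $2(s+1)$ symbols already used by a minimal ($W_B(n,s)$-type) skeleton — actually $2s$ if one accounts correctly for the boundary symbols $\si_0=0$ and $\si_{n+1}=\bar\infty$, giving $n+2-2(s+1)=n-1-2s$ available ``gaps or insertion slots'' — one chooses $k-s$ of them to elongate existing slides. I would make this precise by describing the forward map (contract each slide of $\si$ back to length $2$, recording which entries were removed and into which slide they go) and its inverse (re-insert), and check both preserve membership in $B^{++}_n$ (resp.\ $B^{-+}_n$), i.e.\ that $\si_1>0$ (resp.\ $\si_1<0$) is unaffected, and preserve the slide count $s+1$ and produce exactly $k$ descents.

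The main obstacle I anticipate is pinning down the exact count $\binom{n-1-2s}{k-s}$ rather than some nearby binomial: one has to be careful about the roles of the sentinel values $\si_0=0$ and $\si_{n+1}=\bar\infty$ (which are part of the slide structure but not of $\si$ itself), about whether the terminal slide $2\bar\infty$-type behaves differently from internal slides (it can have $0$ or more descents, internal ones have $\geq 1$), and about the constraint $\si_1>0$ in the $B^{++}$ case versus $\si_1<0$ in the $B^{-+}$ case, which is why the two formulas share the same binomial factor but differ only in their base cases $b^{\pm+}(n,s)$ and in the lower limit of summation. Once the bookkeeping of positions is set up correctly, the bijection itself — elongating or contracting decreasing runs — is routine, and the sum formulas drop out. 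I would present the argument by first treating the $B^{++}$ case in full and then remarking that the $B^{-+}$ case is identical after replacing ``$\geq 1$ slide'' by ``$\geq 2$ slides'' and noting that in $B^{-+}_n$ the first entry is negative, which forces the initial slide $0\si_1$ to be genuine.
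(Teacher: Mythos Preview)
Your proposal is essentially the paper's proof: a bijection that takes a permutation in $W_B(n,s)\cap B^{++}_n$ (all $s+1$ slides of length exactly $2$), selects $k-s$ of the singleton (non-slide) symbols, and moves each into the nearest slide (the paper specifies: to the left if the symbol is negative, to the right if positive) to create the extra descents, with the $B^{-+}$ case handled identically. The count $n-1-2s$ that worries you is obtained in the paper by counting only the $n+1$ symbols $\si_1,\ldots,\si_n,\si_{n+1}$ (excluding the sentinel $\si_0=0$, which in $B^{++}_n$ is always an immovable singleton) and subtracting the $2(s+1)$ symbols occupied by the length-two slides; your displayed computation $n+2-2(s+1)=n-1-2s$ has an off-by-one slip, but the underlying idea is correct.
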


\begin{proof}

  Lets prove the relations for $B^{++}_n$. Let $\si=\si_1\ldots \si_n \in B^{++}_n\cap W_B(n,s,s)$. Counting $\si_{n+1}=\bar{\infty}$, there are $n + 1$ symbols and $n + 1- 2(s+1)=n-1-2s$ that are not included in the slides. Choose $k-s$ of these $n - 1-2s$
elements, move each chosen element $\si_k$ to the left if $\si_k<0$( to right  if $\si_k>0$, respectively ) into the nearest slide $*\si_j\si_{j+l}*$
with $\si_j >\si_k>\si_{j+l}.$  After moving chosen elements, the resulting permutation is still in $B^{++}_n$. Thus, the first relation holds. The second assertion follows upon summing $b^{++}(n,k,s)$ over $0\leq s\leq k.$ For $B^{-+}_n$, the proof is  similar.

Figure $1$ illustrates the one-one correspondence. For example, let  $56\bar{8}\bar{1}2\bar{9}\bar{7}34\in B^{++}_9$,  $*0*5*6\bar{8}*\bar{1}*2\bar{9}*\bar{7}*3*4\bar{\infty}*$ has 3 slides and 2 descents.
\begin{figure}[htbp]

{\includegraphics[width=0.7\linewidth]{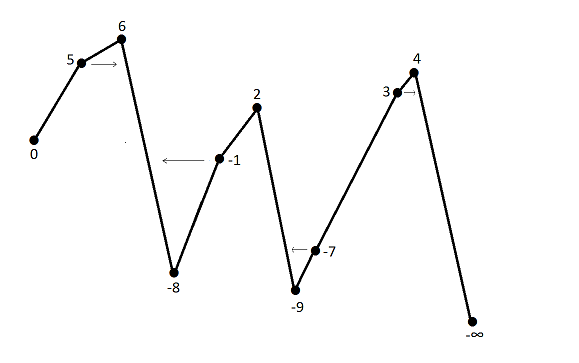}}

\caption{}
\end{figure}
\end{proof}
\begin{teo}\label{5.2}
  For $n\geq 1$, $$B^{++}_n(x)=\sum_{s=0}^{\lfloor\frac{n-1}{2}\rfloor}b^{++}(n,s)x^s(1+x)^{n-1-2s}$$ and $$B^{-+}_n(x)=\sum_{s=1}^{\lfloor\frac{n-1}{2}\rfloor}b^{-+}(n,s)x^s(1+x)^{n-1-2s}.$$
\end{teo}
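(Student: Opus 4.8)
The plan is to deduce Theorem \ref{5.2} directly from the combinatorial identity in Proposition \ref{slide} by a standard "$\g$-expansion" bookkeeping argument, without any new structural input. Recall that $B^{++}_n(x)=\sum_{k=0}^{n-1}B^{++}(n,k)x^k$ (the top coefficient $B^{++}(n,n-1)$ being $0$ since an element of $B^{++}_n$ has $\si_1>0$, hence at most $n-1$ descents but in fact the symmetry $B^{++}(n,k)=B^{++}(n,n-1-k)$ from \eqref{sym} together with $B^{++}(n,-1)=0$ shows the polynomial has degree $\le n-1$ and is centered at $(n-1)/2$). The point is that $\{x^s(1+x)^{n-1-2s}\}_{0\le s\le \lfloor (n-1)/2\rfloor}$ is a basis for the space of symmetric polynomials of degree $\le n-1$ with center of symmetry $(n-1)/2$, so it suffices to check that the right-hand side reproduces the correct coefficients.

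First I would substitute the formula of Proposition \ref{slide}, namely $B^{++}(n,k)=\sum_{s=0}^{k}\binom{n-1-2s}{k-s}b^{++}(n,s)$, into $B^{++}_n(x)=\sum_k B^{++}(n,k)x^k$ and interchange the order of summation:
\begin{align*}
B^{++}_n(x)&=\sum_{k\ge 0}\Bigl(\sum_{s=0}^{k}\binom{n-1-2s}{k-s}b^{++}(n,s)\Bigr)x^k
=\sum_{s\ge 0}b^{++}(n,s)\sum_{k\ge s}\binom{n-1-2s}{k-s}x^k\\
&=\sum_{s\ge 0}b^{++}(n,s)\,x^s\sum_{m\ge 0}\binom{n-1-2s}{m}x^m
=\sum_{s\ge 0}b^{++}(n,s)\,x^s(1+x)^{n-1-2s},
\end{align*}
using the binomial theorem $\sum_{m}\binom{N}{m}x^m=(1+x)^N$ with $N=n-1-2s$ (the sum being empty, i.e. the whole term vanishing, once $2s>n-1$, which is why the outer sum truncates at $s=\lfloor(n-1)/2\rfloor$). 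The identical computation with $b^{-+}(n,s)$ in place of $b^{++}(n,s)$ and the sum starting at $s=1$ (since an element of $B^{-+}_n$ has at least $2$ slides, so $b^{-+}(n,0)=0$) gives the second formula.

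The only genuinely non-routine point — and the step I would treat as the heart of the argument — is the interchange of summation and the verification that the re-indexing is valid, i.e. that the coefficient extracted on the right-hand side agrees with $B^{++}(n,k)$ for \emph{all} $k$, including $k>\lfloor(n-1)/2\rfloor$; this is precisely where one uses that Proposition \ref{slide} already encodes the full information $b^{++}(n,k,s)=\binom{n-1-2s}{k-s}b^{++}(n,s)$ for every $k$, together with the fact that $\binom{n-1-2s}{k-s}=0$ once $k-s>n-1-2s$, so no spurious high-degree terms appear. One can cross-check consistency with the symmetry \eqref{sym}: the expansion produced is manifestly a nonnegative combination of the palindromic polynomials $x^s(1+x)^{n-1-2s}$, so it is automatically symmetric about $(n-1)/2$, matching \eqref{sym}. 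Since the expansion in the basis $\{x^s(1+x)^{n-1-2s}\}$ is unique, the $b^{++}(n,s)$ (resp.\ $b^{-+}(n,s)$) are exactly the $\g$-coefficients, which simultaneously yields $\g$-nonnegativity of $B^{++}_n(x)$ and $B^{-+}_n(x)$ with an explicit combinatorial interpretation via $W_B(n,s)$.
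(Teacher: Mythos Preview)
Your proof is correct and follows essentially the same approach as the paper: both deduce the theorem directly from Proposition \ref{slide} by recognizing that the coefficient identity $B^{++}(n,k)=\sum_{s}\binom{n-1-2s}{k-s}b^{++}(n,s)$ is exactly what one obtains by expanding $\sum_s b^{++}(n,s)x^s(1+x)^{n-1-2s}$. The paper's one-line proof cites Proposition \ref{slide}, the symmetry \eqref{sym}, and the binomial symmetry $\binom{n-1-2s}{k-s}=\binom{n-1-2s}{n-1-k-s}$; your version is a slightly more explicit variant that applies the binomial theorem directly after interchanging the sums, so that the symmetry \eqref{sym} becomes a consequence rather than an ingredient, but there is no substantive difference.
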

\begin{proof}
  The result follows by Proportion \ref{slide}, the relation \eqref{sym} and the fact that $\binom{n-1-2s}{k-s}=\binom{n-1-2s}{n-1-k-s}$.
\end{proof}

\begin{remark}

 By  Theorem \ref{3.1}, we have
 \begin{equation}\label{BB+}
 B^+_n(x)=h(\Int(2^{[n]}), x),
 \end{equation}
   so $B^{-+}_n(x)$ is equal to the difference of $h(\Int(2^{[n]}),x)$ and $h(\partial(\Int(2^{[n]})),x)$. Thus, by Theorem \ref{5.2}, $B^{-+}_n(x)$ is is nonnegative, $\g$-nonnegative, and hence unimodal. Hence, by Theorem  \ref{Juhnke},  the local $h$-vector of $\Gamma$  can be written  as
   $$\ell_V(\Gamma, x)=\sum_{k=0}^{n}{n \choose k}B^{-+}_n(x)\mathfrak{d}_{n-k}^A(x).$$ Since the sum and product of $\g$-nonnagative polynomials is $\g$-nonnegative, so the local $\g$-vector of $\Gamma$ is nonnegative, which proves Theorem \ref{local gamma}.

\end{remark}

\bibliographystyle{amsalpha}
\bibliography{References}
\end{document}